\crefname{section}{section}{sections}
\crefname{subsection}{subsection}{subsections}
\crefname{figure}{figure}{figures}
\Crefname{section}{Section}{Sections}
\Crefname{subsection}{Subsection}{Subsections}
\Crefname{figure}{Figure}{Figures}
\makeatletter\@addtoreset{equation}{section}\makeatother
\newtheorem{thm}{Theorem}[section] 
\newtheorem{lem}[thm]{Lemma}
\newtheoremstyle{named}{}{}{\itshape}{}{\bfseries}{.}{.5em}{\thmnote{#3's }#1}
\theoremstyle{named}
\theoremstyle{definition}
\newcommand{\R}{\mathbb{R}}
\newcommand{\drm}{\mathrm{d}}
\newcommand{\uv}{{\bf u}}
\newcommand{\zv}{{\bf z}}
\newcommand{\bF}{{\bf F}}
\newcommand{\ddxi}{\frac{\rm d}{{\rm d}\xi}}
\newcommand{\cS}{U_1}
\newcommand{\cP}{U_2}
\newcommand{\zDomain}{Z}
\newcommand{\eps}{\varepsilon}
\newcommand\solidrule[1][15pt]{\rule[0.5ex]{#1}{1pt}}
\newcommand\dashedrule{\mbox{%
	\solidrule[3pt]\hspace{3pt}\solidrule[3pt]\hspace{3pt}\solidrule[3pt]}}
\newcommand\dottedrule{\mbox{%
	\solidrule[1.5pt]\hspace{2pt}\solidrule[1.5pt]\hspace{2pt}\solidrule[1.5pt]\hspace{2pt}\solidrule[1.5pt]\hspace{2pt}\solidrule[1.5pt]}}
\newcommand\dashdottedrule{\mbox{%
	\solidrule[4.5pt]\hspace{2pt}\solidrule[2pt]\hspace{2pt}\solidrule[4.5pt]}}
\definecolor{mygreen}{rgb}{0,0.4,0}
\definecolor{mypink}{rgb}{.9,.4,.7}
\begin{document}

\title{Minimum wave speeds in monostable reaction--diffusion equations: sharp bounds by polynomial optimization}

\author{
Jason J. Bramburger\thanks{Current e-mail: jbrambur@uw.edu}\ \ and David Goluskin\\[6pt]
Department of Mathematics and Statistics\\
University of Victoria \\
Victoria, BC, V8P 5C2, Canada
}

\date{}
\maketitle

\begin{abstract}
Many monostable reaction--diffusion equations admit one-dimensional travelling waves if and only if the wave speed is sufficiently high. The values of these minimum wave speeds are not known exactly, except in a few simple cases. We present methods for finding upper and lower bounds on minimum wave speed. They rely on constructing trapping boundaries for dynamical systems whose heteroclinic connections correspond to the travelling waves. Simple versions of this approach can be carried out analytically but often give overly conservative bounds on minimum wave speed. When the reaction--diffusion equations being studied have polynomial nonlinearities, our approach can be implemented computationally using polynomial optimization. For scalar reaction--diffusion equations, we present a general method and then apply it to examples from the literature where minimum wave speeds were unknown. The extension of our approach to multi-component reaction--diffusion systems is then illustrated using a cubic autocatalysis model from the literature. In all three examples and with many different parameter values, polynomial optimization computations give upper and lower bounds that are within 0.1\% of each other and thus nearly sharp. Upper bounds are derived analytically as well for the scalar RD equations.
\end{abstract}


\section{Introduction}

Reaction--diffusion (RD) equations are a large family of partial differential equations (PDEs) modeling diverse phenomena that include chemical reactions, spatial effects in ecology, morphogenesis, and interacting particle systems~\cite{Cantrell, Cremer, Masi, Murray, Turing, Volpert}. Among the simplest solutions to RD equations is the travelling wave---a state that is time-independent in a reference frame that moves at the wave speed. Such states arise in models of applications that include biological invasion~\cite{Cantrell,Ding}, urban crime~\cite{Berestycki}, and bacterial pattern formation~\cite{Mansour,Satnoianu}. In many systems whose reaction dynamics are monostable, there is a finite minimum wave speed such that travelling waves exist at all faster speeds and no slower ones \cite{Saarloos}. In all but the simplest cases, values of the minimum speeds are not known precisely~\cite{Billingham, Hosono2, Kollar, Li, Mansour, Mingxin, Saarloos, Takase}. These values are of particular importance for monostable RD equations where localized initial conditions lead to fronts travelling at the minimum wave speeds. This property has been proved to hold for various RD equations~\cite{Aaronson, KPP, Saarloos, Stokes}, including with stochastic forcing~\cite{Mueller}, but there are many other RD equations where this property is suspected but not proved. The latter cases call for a way to determine minimum wave speeds that does not rely on numerical integration of PDEs. Furthermore, many RD equations are computationally expensive to integrate due to stuffness, so it would be valuable to have a method for finding minimum wave speeds that avoids integrating PDEs.

Here we present methods for precisely estimating minimum speeds of one-dimensional travelling waves in monostable RD equations. The methods apply to scalar equations broadly and to certain classes of multicomponent RD systems. In the scalar case we consider RD equations of the form
\begin{equation}
\label{eq: general pde}
u_t = (D(u)u_x)_x + a(u)u_x + f(u),
\end{equation}
where the reaction term $f$ is monostable, and $x$ is the single coordinate in which the wave varies. (Multicomponent systems are discussed in below.) Our approach relies on the fact that a one-dimensional travelling wave in the RD equation~\cref{eq: general pde}, or its multicomponent analogue, is equivalent to a heteroclinic connection in a corresponding system of ordinary differential equations (ODEs). A travelling wave solution of~\cref{eq: general pde} takes the form $u(x,t) = u(\xi)$, where $\xi:=x-ct$ for some wave speed $c$, in which case $u(\xi)$ must solve the ODE
\begin{equation}
\label{eq: general ode}
(D(u)u_\xi)_\xi + (a(u) + c)u_\xi + f(u) = 0.
\end{equation}
Introducing a vector variable such as $\uv=(u,u_\xi)$ lets the above ODE be written as an autonomous dynamical system,
\begin{equation}
\label{eq: dyn syst}
\dot\uv=\bF(\uv;c),
\end{equation}
where $\dot\uv$ denotes $\tfrac{\rm d}{{\rm d}\xi}\uv$, and $c$ is a parameter. A dynamical system~\cref{eq: dyn syst} governing travelling waves will be two-dimensional in the case of scalar RD equations~\cref{eq: general pde} and higher-dimensional in the case of multicomponent RD systems. In either case a heteroclinic trajectory $\uv(\xi)$ of~\cref{eq: dyn syst}, going from a \emph{source} equilibrium to a \emph{target} equilibrium, corresponds to a travelling wave in the original RD equation. To seek travelling waves that satisfy additional constraints, such as nonnegativity of $u(x,t)$, corresponding geometric constraints can be placed on the heteroclinic connection $\uv(\xi)$. For an RD equation with a unique minimum wave speed $c_*$, if the corresponding ODE system can be shown to have a heteroclinic connection for some value of $c$, then $c$ is an upper bound on $c_*$. If the ODE system can be shown to lack such a connection, then $c$ is a lower bound on $c_*$. What is therefore needed are methods to determine whether or not two equilibria in an ODE system have a trajectory connecting them, possibly subject to further constraints on that trajectory. The present work provides such methods, which in turn give precise upper and lower bounds on $c_*$.

The existence or nonexistence of a heteroclinic connection between specified source and target equilibria in $n$-dimensional phase space can be determined by finding certain $(n-1)$-dimensional surfaces that are trapping boundaries, meaning trajectories cross these surfaces in only one orientation. To show that a connection \emph{does not} exist, it suffices to find trapping surfaces that form a barrier between the source and the target. To show that a heteroclinic connection \emph{does} exist, it often is useful to show that some trajectories leaving the source equilibrium must enter a trapping region containing the target, but this alone is not enough. Verifying the connection requires a second argument, often relying on some type of monotonicity. If the target equilibrium attracts all trajectories within the trapping region, for instance, this can be shown by constructing a Lyapunov function which decreases monotonically along trajectories. (If the target is a saddle, one might employ Wazewski's theorem instead of monotonicity \cite[Proposition~2]{Dunbar2}.) Suitable monotonicity can be difficult to show for general ODE systems, but the task is relatively easy for ODEs that correspond to travelling waves in monostable RD equations. For each RD equation studied here, the corresponding ODE system has a locally attracting target and a simple monotonicity property. In such cases the main difficulty in showing existence of a heteroclinic connection, as in showing nonexistence, is to find suitable trapping boundaries.

When seeking a trapping surface whose geometry implies the existence or nonexistence of a heteroclinic connection, we impose constraints on the surface to ensure it has the desired implication. Examples throughout the present work illustrate such constraints, but choosing them is not the main difficulty. The difficulty, at least historically, is to find a surface that satisfies given constraints and is indeed a trapping boundary. Trapping surfaces typically have been found in an \emph{ad hoc} way. A common analytical approach is to consider a simple linear or quadratic surface, possibly with free parameters that can be tuned, and try to show that trajectories cross this surface in only one orientation. At the core of our present contribution is a powerful computational method for finding trapping surfaces defined by much more general polynomial expressions.

We construct trapping boundaries in the phase space of dynamical systems using two different approaches, which we refer to as the volume method and the surface method. Both require finding \emph{auxiliary functions} that satisfy certain inequalities pointwise on a suitable part of phase space. In principle such functions can be sought analytically or computationally, but in many cases the auxiliary functions must be complicated, especially when $c$ is close to the minimum wave speed $c_*$, so simple analytical constructions are unable to give precise upper or lower bounds on $c_*$. An important case where computational methods can be used to construct more complicated auxiliary functions is that of spatially homogenous RD equations whose nonlinearities are polynomial. All examples studied here are of this type.

When all nonlinearities of an RD equation are polynomial, the state vector $\uv$ can be defined such that the dynamical system~\cref{eq: dyn syst} which governs travelling waves has a right-hand side that is polynomial in the components of $\uv$. We then restrict our search for auxiliary functions to some finite-dimensional set of polynomials. In such cases, the inequalities that auxiliary functions must satisfy in order to give trapping boundaries are equivalent to various polynomial expressions being nonnegative on specified parts of phase space. Although deciding nonnegativity of a multivariable polynomial is NP-hard in general~\cite{Murty1987}, nonnegativity can be replaced by the stronger but more tractable condition that the polynomial admits a representation as a sum of squares (SOS) of other polynomials. The resulting SOS-constrained problems can be solved computationally by an approach that has become widely used since its introduction two decades ago~\cite{Nesterov2000, Parrilo2000, Lasserre2001}: reformulate the SOS problem as a semidefinite program~\cite{Boyd2004} (a standard type of convex optimization problem) and solve the latter numerically. In the present context these numerics give an auxiliary function satisfying the imposed SOS constraints, provided any such function exists in the set of polynomials where it is sought. Depending on how the SOS constraints are designed, success in finding an auxiliary function at some value of $c$ implies either existence or nonexistence of a heteroclinic connection in the dynamical system~\cref{eq: dyn syst} and, in turn, existence or nonexistence of travelling waves in an RD equation.

The methods presented here join a number of recent works in which SOS-constrained computations are used to study dynamical systems, although none have studied travelling waves specifically. Various information about dynamical systems can be inferred by constructing auxiliary functions that satisfy various constraints. The best-known type of auxiliary function is the Lyapunov function, whose constraints imply nonlinear stability of a particular state. These can be constructed by SOS computations~\cite{Parrilo2000, Papachristodoulou2002}, as can other types of auxiliary functions that provide bounds on deterministic or stochastic time averages~\cite{Chernyshenko2014, Fantuzzi2016siads, Kuntz2016, Goluskin2016lorenz, Tobasco2018, Korda2018a, Goluskin2019}, on extrema over global attractors~\cite{Goluskin2020a}, and on transient extrema~\cite{Fantuzzi2020}. Especially related to our approach are SOS methods providing guarantees that trajectories do~\cite{Magron2019, Jones2019} or do not~\cite{Prajna2007, Ahmadi2017} enter specified sets. The present study adapts these ideas specifically to ODEs that govern travelling waves in RD equations, where the particular structure of such ODEs allows for novel methods that exploit~it.

We have applied the general approach presented here to compute upper and lower bounds on the minimum wave speed $c_*$ for several examples in which $c_*$ is not known exactly, including two scalar RD equations and a two-component RD system. The first scalar example is a generalized Fisher--KPP equation~\cite{Fisher, KPP, Billingham}, $u_t = u_{xx} + u^m(1-u)$, for integers $m$ larger than the well understood value $m=1$, and the second is a chemotaxis model introduced in~\cite{Mansour}. Our two-component example is a model of cubic autocatalysis that has been studied for decades~\cite{Billingham1991a, Billingham1991b, Chen, Chen2, Focant, Hosono, Qi}. In all three examples, our best bounds at many different parameter values are within 0.1\% of being sharp, as indicated by the near-equality of upper and lower bounds. 

This manuscript is organized as follows. Section~\ref{sec:Methods} presents two general approaches for finding trapping boundaries in dynamical systems using auxiliary functions, as well as how to construct these functions computationally using SOS constraints when the differential equations are polynomial. In \Cref{sec:Single} our framework is tailored to scalar RD equations in general, and then it is applied both computationally and analytically to produce bounds on $c_*$ for our two examples of this type. \Cref{sec:Multi} extends these methods to our chosen example of a two-component RD system, for which we compute bounds and infer asymptotic scalings of $c_*$. \Cref{sec:Con} offers conclusions.


\section{\label{sec:Methods}Constructing trapping boundaries using auxiliary functions}

In this section we describe methods for finding trapping boundaries in the phase spaces of dynamical systems. Variants of these methods appear widely in the literature, although usually without assistance from SOS computations. Only the applications of these methods in subsequent sections are specific to dynamical systems that govern travelling waves in RD equations. For such applications, the examples of Sections~\ref{sec:Single} and~\ref{sec:Multi} illustrate how to constrain the trapping boundaries being constructed so that they imply the existence or nonexistence of heteroclinic connections. First we consider how to find trapping boundaries in general, subject to constraints on their locations in phase space. The approach of Section~\ref{subsec:Volume}, which we call the \emph{volume method}, seeks an auxiliary function that satisfies an inequality pointwise on a positive-volume subset of the phase space $\R^n$. The approach of Section~\ref{subsec:Surface}, which we call the \emph{surface method}, instead seeks an auxiliary function that satisfies an inequality on an $(n-1)$-dimensional surface, but it can be applied only to dynamical systems with certain structure. The volume and surface inequalities each define convex sets of auxiliary functions. This convexity is exploited in Section~\ref{subsec:SOS}, which describes how suitable functions can be constructed computationally using SOS constraints, provided the right-hand side of the dynamical system~\cref{eq: dyn syst} is polynomial in $\uv$.


\subsection{\label{subsec:Volume}Volume inequality conditions}

Consider a well-posed autonomous dynamical system~\cref{eq: dyn syst} with trajectories $\uv(\xi)$ in $\R^n$ and all parameters fixed. We aim to find a continuously differentiable auxiliary function $V:\R^n\to\R$ whose zero level set forms a trapping boundary, at least within some specified region $U\subseteq\R^n$. The $V(\uv)=0$ level set divides $U$ according to the sign of $V$. Trajectories remaining in $U$ are trapped in, say, the negative-$V$ region if and only if $\ddxi V(\uv(\xi))\le 0$ for all $\uv(\xi)$ on the zero-$V$ set. Because $\ddxi V(\uv(\xi))=\bF(\uv(\xi))\cdot\nabla V(\uv(\xi))$ along every trajectory, where $\bF$ is the right-hand side of the dynamical system~\cref{eq: dyn syst}, the trapping condition can be stated independently of $\xi$ as
\begin{equation}
\label{eq: V surface}
\bF(\uv)\cdot\nabla V(\uv) \leq 0 \quad \forall~\uv\in U~~s.t.~~V(\uv)=0.
\end{equation}
We ultimately want to search over a large class of functions for a $V$ that satisfies~\cref{eq: V surface}. It is prohibitively hard to do this directly because the set of $V$ satisfying~\cref{eq: V surface} would form a non-convex set, due to the appearance of $V$ both in the inequality and in the $V(\uv)=0$ condition. Instead we impose an inequality over all of $U$ that is sufficient for~\cref{eq: V surface}: we require that there exists a constant $\lambda>0$ such that 
\begin{equation}
\label{eq: Trapping}
\lambda\, \bF(\uv)\cdot\nabla V(\uv) \leq - V(\uv) \quad \forall~\uv\in U.
\end{equation}
Although this condition does not imply $\bF(\uv)\cdot\nabla V(\uv) \leq 0$ on all of $U$, it does so on the subset of $U$ where $V(\uv)$ vanishes, thereby implying~\cref{eq: V surface}. Crucially, for any convex class of functions, the subset of functions satisfying~\cref{eq: Trapping} is convex. This leads to tractable computational methods for constructing $V$, as described below in Section~\ref{subsec:SOS}. One might imagine that imposing~\cref{eq: Trapping} over all of $U$ is too restrictive, but the successful computations reported in Sections~\ref{sec:Single} and~\ref{sec:Multi} show otherwise. We refer to the use of condition~\cref{eq: Trapping} as the volume method since the inequality is imposed on a set $U$ of nonzero volume, in contrast to the approach of the next subsection.

To use the volume method to verify existence or nonexistence of heteroclinic connections, the volume condition~\cref{eq: Trapping} must be accompanied by other constraints that are specific to the dynamical system under study. Suppose all trajectories of interest lie in a fixed set $U$, the choice of which can encode desired restrictions such as nonnegativity of components of $\uv(\xi)$. To show nonexistence of a heteroclinic connection within $U$, from a specified source $\uv^-$ and target $\uv^+$, it suffices to find a $V$ that satisfies~\cref{eq: Trapping} as well as the scalar constraints $V(\uv^-)\le 0$ and $V(\uv^+)=1$. Auxiliary functions used in this way, to create a trapping barrier between specified regions, are sometimes called barrier functions in the literature. A few recent works have sought barrier functions computationally using SOS methods, as we do here; see \cite{Prajna2007, Ahmadi2017} and references therein. To show existence of a heteroclinic connection within $U$, various constraints must be imposed alongside~\cref{eq: Trapping}, depending on the geometry of phase space. The choice of such constraints is illustrated in Sections~\ref{sec:Single} and~\ref{sec:Multi}.


\subsection{\label{subsec:Surface}Surface inequality conditions}

We now describe a way to construct trapping boundaries using auxiliary functions that are constrained only on the boundary set, in contrast to the volume method of the previous subsection. For general dynamical systems, a volume condition such as~\cref{eq: Trapping} is needed in order for the auxiliary functions satisfying the constraint to form a convex set, as explained above. When the dynamical system has a particular structure, however, there is a convex way to constrain auxiliary functions only on an $(n-1)$-dimensional trapping surface. We refer to this approach as the surface method. It is useful to us because travelling waves in RD equations often correspond to dynamical systems with the requisite structure. 

We seek a trapping surface that can be described as a graph. Suppose for concreteness that the last coordinate of $\uv$ on this surface can be given as a function of the others, meaning $u_n=N(\zv)$, at least for all $\zv=(u_1,\ldots,u_{n-1})$ in some $\zDomain\subseteq\R^{n-1}$. This surface divides $\zDomain\times\R$ in two based on the sign of $u_n-N$. To show that it traps trajectories in, say, the region where $u_n\ge N$, one must show that $\ddxi[u_n(\xi)-N(\zv(\xi))]\ge0$ at all points where a trajectory $\uv(\xi)=(\zv(\xi),u_n(\xi))$ intersects the surface. Applying the chain rule and the fact that $u_n=N(\zv)$ gives the equivalent condition
\begin{equation}
\label{eq: SurfaceCond}
\begin{bmatrix}-\nabla N(\zv)\\1\end{bmatrix}\cdot \bF(\zv,N(\zv)) \geq 0
\quad \forall~\zv\in \zDomain,
\end{equation}
where $\bF$ is the right-hand side of the dynamical system~\cref{eq: dyn syst}. For general $\bF$, the functions $N$ satisfying~\cref{eq: SurfaceCond} will not form a convex set, in which case we cannot construct $N$ by the computational methods described in the next subsection. An important exception occurs when $\bF$ depends only linearly on $u_n$ in the manner
\begin{equation}
\label{eq: SurfaceODE}
\bF(\uv) = \begin{bmatrix} \bF_1(\zv) \\ F_2(\zv)+F_3(\zv)u_n \end{bmatrix} + u_n\hat e_{n-1},
\end{equation}
where $\hat e_{n-1}$ is the unit vector in the $u_{n-1}$ direction, $F_2,F_3$ are scalar-valued functions, and \mbox{$\bF_1:\R^{n-1}\to\R^{n-1}$}. Such ODEs arise, for example, from multicomponent RD systems of the form ${\bf w}_t = {\bf w}_{xx} + G({\bf w})$, where ${\bf w}$ is a vector. Travelling waves in such systems must solve ${\bf w}_{\xi\xi} + c{\bf w}_\xi + G({\bf w}) = 0$, and letting $\uv = ({\bf w},{\bf w}_\xi)$ gives a first order system of ODEs in the form~\cref{eq: SurfaceODE}, where any component of ${\bf w}_\xi$ can take the place of $u_n$ in the above formulation. 

When $\bF$ has the form~\cref{eq: SurfaceODE}, the condition~\cref{eq: SurfaceCond} for the $u_n=N(\zv)$ surface to be trapping becomes
\begin{equation}
\label{eq: surf non-convex}
F_2(\zv)+F_3(\zv)N(\zv)-\bF_1(\zv)\cdot\nabla N(\zv) - \frac{\partial N}{\partial u_{n-1}}(\zv)N(\zv) \ge 0
\quad \forall~\zv\in \zDomain.
\end{equation}
Anticipating the computational formulation in the next subsection, we transform~\cref{eq: surf non-convex} into a constraint that is linear in $N$. First we take an antiderivative with respect to the $u_{n-1}$ coordinate. We assume for simplicity that the domain $\zDomain$ has a boundary on the $u_{n-1}=0$ hyperplane, and that $N$ vanishes there. Under these assumptions, which are not needed but do hold in our examples of \cref{sec:Single}, expression~\cref{eq: surf non-convex} gives
\begin{equation}
\label{eq: Surf1}
\int_0^{u_{n-1}} [F_2(\zv)+F_3(\zv)N(\zv)- \bF_1(\zv)\cdot\nabla N(\zv)]\drm s - \frac{1}{2}N(\zv)^2 \geq 0 \quad \forall~\zv\in \zDomain,
\end{equation}
where the integration variable $s$ denotes the last component of $\zv$. The left-hand expression in~\cref{eq: Surf1} is not linear in $N$ but is the Schur complement of a $2\times2$ matrix, so~\cref{eq: Surf1} is equivalent to semidefiniteness of that matrix:
\begin{equation}
\begin{bmatrix}
\label{eq: N matrix}
\int_0^{u_{n-1}} [F_2(\zv)+F_3(\zv)N(\zv)- \bF_1(\zv)\cdot\nabla N(\zv)]\ \drm s & N(\zv) \\
N(\zv) & 2	
\end{bmatrix} \succeq 0 \quad \forall~\zv\in \zDomain.
\end{equation}
This constraint is linear in $N$, and it can be rewritten as a scalar inequality using the definition of semidefiniteness. Multiplying on the left and right by an arbitrary vector $(y_1,y_2)$ gives an equivalent scalar inequality that also is linear in $N$:
\begin{multline}
\label{eq: Nineq}
y_1^2 \int_0^{u_{n-1}} [F_2(\zv)+F_3(\zv)N(\zv)- \bF_1(\zv)\cdot\nabla N(\zv)]\ \drm s + 2y_1y_2N(\zv) + 2y_2^2 \geq 0 \\ \forall~(\zv,y_1,y_2)\in \zDomain\times \R^2.
\end{multline}
Finding any auxiliary function $N$ that satisfies~\cref{eq: Nineq} shows that the surface $u_n=N(\zv)$ traps trajectories in the region where $u_n\ge N(\zv)$. Computational methods for constructing $N$ subject to~\cref{eq: Nineq} are described in the next subsection. In \Cref{sec:Single} we also apply condition~\cref{eq: Nineq} analytically in the simple case of linear $N$.

Using the surface method to infer the existence or nonexistence of heteroclinic connections requires combining~\cref{eq: Nineq} with additional constraints on $N$. \Cref{sec:Single} gives such constraints for general ODEs corresponding to travelling waves in scalar RD equations. For these equations we use the surface method to show existence but are unable to use it to show nonexistence. Therefore the surface method here provides \emph{only upper bounds} on the minimum wave speed $c_*$, whereas the volume method gives both upper and lower bounds. \Cref{sec:Single} reports upper bounds found computationally using polynomial $N$ of various degrees, as well as upper bounds derived analytically using linear $N$.


\subsection{\label{subsec:SOS}Computational formulation with sum-of-squares polynomial constraints}

The sufficient conditions for finding trapping boundaries described in sections \ref{sec:Methods}\ref{subsec:Volume} and \ref{sec:Methods}\ref{subsec:Surface} require finding auxiliary functions that satisfy inequalities pointwise on specified sets. In particular, $V$ must satisfy~\cref{eq: Trapping} on $U\subset \R^n$, and $N$ must satisfy~\cref{eq: Nineq} on $\zDomain\times\R^2\subset \R^{n+1}$. There is no fully general method for constructing functions subject to such inequalities, but a computational approach is possible when the ODE system's right-hand side $\bF(\uv)$ is polynomial in the components of $\uv$. All examples studied below are of this type since they correspond to RD equations that are polynomial in the PDE variables and their spatial derivatives. We henceforth assume that $\bF(\uv)$ is polynomial, and we seek auxiliary functions, $V(\uv)$ or $N(\zv)$, that are polynomial also. To make this a finite-dimensional search, one must choose a finite polynomial vector space in which to seek $V$ or $N$, for instance by specifying a maximum polynomial degree of the auxiliary function.

Since we restricted attention to polynomial ODEs and polynomial auxiliary functions, the volume condition~\cref{eq: Trapping} and surface condition~\cref{eq: Nineq} each amount to nonnegativity of a polynomial expression on a specified set. Deciding nonnegativity of a polynomial over $\R^n$ has prohibitive computational complexity unless the polynomial degree is very small. A stronger condition that is easier to check is that the polynomial belongs to $\Sigma_n$, the set of polynomials in $n$ variables that admit SOS representations. For concreteness consider the volume condition~\cref{eq: Trapping}, where the polynomial $-(\lambda\, \bF\cdot\nabla V + V)$ must be nonnegative on $U$. Requiring this expression to belong to $\Sigma_n$ would imply nonnegativity on $U$ but is overly strong since it also implies nonnegativity globally on $\R^n$. Instead, there is a standard way to formulate SOS conditions that imply nonnegativity on $U$ but not on $\R^n$~\cite{Parrilo2013a, Lasserre2015}. Assume that the set $U$ is semialgebraic, meaning it can be specified by a finite number of polynomial inequalities and equalities:
\begin{equation}
\label{eq: U}
U = \{\uv\in\R^n :\ s_i(\uv)\ge0\text{ for }i=1,\ldots,I,~r_j(\uv)=0\text{ for }j=1,\ldots,J\}.
\end{equation}
A sufficient condition for the volume constraint~\cref{eq: Trapping} is the existence of polynomials $\{\sigma_i(\uv)\}_{i\le I}$ and $\{\rho_j(\uv)\}_{j\le J}$ such that the following SOS conditions are satisfied:
\begin{subequations}
\label{eq: p sos}
\begin{align}
-(\lambda\, \bF\cdot\nabla V + V) - \sum\limits_{i=1}^Is_i\sigma_i - \sum\limits_{j=1}^Jr_j\rho_j \in\Sigma_n, & \label{eq: p sos 1} \\
\sigma_i\in\Sigma_n & \quad \text{for }i=1,\ldots,I. \label{eq: p sos 2}
\end{align}
\end{subequations}
To see that these conditions imply~\cref{eq: Trapping}, note that on $U$ (but not necessarily outside of $U$) the first sum in~\cref{eq: p sos} is nonnegative and the second sum vanishes. Such a use of SOS conditions to show nonnegativity on a semialgebraic set is variously called a weighted SOS condition~\cite{Lasserre2015} or an $\mathcal S$-procedure \cite{Tan2006, Fantuzzi2016siads}. In our examples below, regions of phase space are specified using only polynomial inequalities, as opposed to equalities, so our formulations use SOS polynomials like $\sigma_i$ but no arbitrary polynomilas like $\rho_j$.

To search computationally for $V$, $\sigma_i$, and $\rho_i$ satisfying SOS constraints such as~\cref{eq: p sos}, one represents each unknown polynomial in a chosen finite basis with tunable coefficients. These tunable variables appear linearly in the SOS constraints, which is essential for translating the SOS-constrained problem into a semidefinite program for computational solution. This is why we have insisted that the auxiliary functions appear only linearly in the inequality constraints derived in Sections~\ref{subsec:Volume} and~\ref{subsec:Surface}. Any further constraints on $V$ that are added to infer existence or nonexistence of a heteroclinic connection, such as those described in Section~\ref{subsec:Volume}, must also be linear in the tunable coefficients. Likewise, because $\lambda$ multiplies the tunable coefficients of $V$ in~\cref{eq: p sos}, its value must be fixed in each SOS computation. If desired, one can optimize over $\lambda$ by solving an SOS problem repeatedly while sweeping through $\lambda$. For further details about SOS conditions and their conversion to semidefinite programs, we refer the reader to the review~\cite{Parrilo2013a}.

After choosing vector spaces in which to seek $V$, $\sigma_i$, and $\rho_i$, the semidefinite program corresponding to~\cref{eq: p sos} is solved as a feasibility problem, meaning there is no optimization objective. The computation may either find admissible values of the tunable coefficients or verify that no such values exist. In the computational examples reported below we used the MATAB software YALMIP (version R20190425) \cite{Lofberg2004, Lofberg2009} to translate SOS constraints into semidefinite programs, and we solved the latter using Mosek (version 9.0) \cite{Mosek}. All code is publicly available at the repository {\bf GitHub/jbramburger/Minimum-Wave-Speeds}.

The surface condition~\cref{eq: Nineq} can be enforced by SOS constraints very similarly to the above-described way in which the volume condition~\cref{eq: Trapping} is enforced by~\cref{eq: p sos}, at least when the domain $\zDomain$ is a semialgebraic set. One additional step is needed because of the integral in~\cref{eq: Nineq}: once $N$ is expanded in a chosen finite basis with tunable coefficients, the integration in~\cref{eq: Nineq} must be carried out symbolically. The resulting expression is a polynomial in $(\zv,y_1,y_2)$ in which tunable coefficients still appear linearly, and its nonnegativity on $\zDomain\times\R^2$ can be enforced by SOS constraints analogous to~\cref{eq: p sos}. Additional constraints can be added to help infer the existence of heteroclinic connections, provided they too are linear in $N$.

Our focus in subsequent sections is on dynamical systems whose heteroclinic connections correspond to travelling waves in monostable RD equations. When showing the existence or nonexistence of connections, we want to do so for the smallest and sufficiently large wave speeds, respectively. Unfortunately we cannot specify $c$ as the optimization objective in an SOS computation subject to constraints such as~\cref{eq: p sos}. Since $c$ is multiplied by tunable coefficients in the SOS constraints, $c$ cannot be tunable itself, so we must perform SOS computations at multiple fixed values of $c$. For instance, to find the smallest $c$ value at which the existence of travelling waves can be verified using~\cref{eq: p sos}, we begin with a large value of $c$ at which the SOS computation is feasible and a small value of $c$ at which it is infeasible. We then bisect repeatedly in $c$, solving an SOS program at each new value, to find the smallest $c$ at which the computation is feasible---the best upper bound on $c_*$. (The numerical result is often slightly conservative because, when $c$ is very close to the value that separates feasibility and infeasibility, the semidefinite program solver often can verify neither feasibility nor infeasibility.) To find the best lower bound on $c_*$, we carry out an analogous bisection search in $c$ to find the largest value at which an SOS problem verifying nonexistence is feasible. To try to improve the resulting upper and lower bounds on $c_*$, one can enlarge the vector spaces over which the tunable polynomials are sought and repeat the entire procedure. Although we do not have a proof that existence and non-existence of heteroclinic orbits can be verified for arbitrarily sharp $c$, we note that it appears to be true in all examples presented here.

In the next two sections we report, for various RD equations, upper and lower bounds on $c_*$ computed using the SOS relaxation~\cref{eq: p sos} of the volume method~\cref{eq: Trapping}. For the scalar RD examples in \Cref{sec:Single} we report upper bounds on $c_*$ computed using the analogous SOS relaxation of the surface method~\cref{eq: Nineq}. Upper bounds on $c_*$ computed using both methods are compared for the example of Section~\ref{subsec: Fisher}, in which case the surface method gives sharper bounds with less effort.


\section{\label{sec:Single}Scalar RD equations}

In this section we specialize the methods of Section~\ref{sec:Methods} to estimate minimum speeds $c_*$ at which scalar RD equations admit travelling waves that are \emph{monotone}, meaning solutions $u(\xi)$ such that $u_\xi\le0$ for all $\xi:=x-ct\in\R$. We consider general scalar RD equations~\cref{eq: general pde} under the assumptions that $D(u) > 0$ for all $u \in (0,1]$, $f(u) > 0$ for all $u \in (0,1)$, and $f(0)=f(1) = 0$. Monotone travelling waves in such equations satisfy the second-order ODE~\cref{eq: general ode}, obey $0\le u(\xi)\le1$, and approach the limits $u(-\infty) = 1$ and $u(\infty) = 0$. Conditions giving upper and lower bounds on $c_*$ are formulated below in Subsections~\ref{subsec:1DUpper} and~\ref{subsec:1DLower}, respectively. These conditions can be implemented using SOS computations, provided that $D(u)$ and $f(u)$ are polynomials. Subsections~\ref{subsec: Fisher} and \ref{subsec: chemotaxis} report results of SOS computations for a generalized Fisher--KPP equation and a chemotaxis model, respectively, as well as upper bounds on $c_*$ derived analytically.

To apply the methods of Section~\ref{sec:Methods} we must rewrite the ODE~\cref{eq: general ode}, which governs travelling waves, as a first-order system. We want the right-hand side of this system to be polynomial whenever $D(u)$ and $f(u)$ are, which precludes using the phase space $(u, u_\xi)$. Instead we use $(u,v)$ with $v = D(u)u_\xi$, and following~\cite{Mansour} we define a new independent variable $\hat \xi$ by
\begin{equation}
	\xi = \int_0^{\hat{\xi}} \frac{\drm s}{D(u(s))}.
\end{equation}
In these variables, the ODE~\cref{eq: general ode} governing travelling waves becomes the dynamical system
\begin{equation}
\label{eq: scalar dyn syst}
	\begin{split}
		\dot{u} &= v, \\
		\dot{v} &= -[c + a(u)]v - D(u)f(u),
	\end{split}	
\end{equation}
where for the remainder of this section a dot denotes differentiation with respect to $\hat \xi$.

A travelling wave in the scalar RD equation corresponds to a heteroclinic connection in the dynamical system~\cref{eq: scalar dyn syst} from the source equilibrium $(1,0)$ to the target $(0,0)$. The constraint that the wave is monotonic corresponds to the requirement that the heteroclinic connection lies in the region
\begin{equation}
	\label{eq: S}
	\cS = \{(u,v)\in\R^2:\ 0\le u\le1,~v \leq 0\}.
\end{equation}
Note that $\cS$ can be defined in the form~\cref{eq: U} of a semialgebraic set by letting $s_1(u,v)=u(1-u)$ and $s_2(u,v)=-v$. The linearization of~\cref{eq: scalar dyn syst} at $(1,0)$ indicates that this equilibrium always has a one-dimensional unstable manifold entering $\cS$, which is the only trajectory that might connect to $(0,0)$. No trajectory can leave $\cS$ across the $v=0$ or $u=1$ boundaries, as follows from the form of~\cref{eq: scalar dyn syst} and the positivity of $D$ and $f$, so the unstable manifold of $(1,0)$ either connects to $(0,0)$ or leaves $\cS$ elsewhere on the $u=0$ boundary. A trapping boundary with suitable geometry can distinguish between these two possibilities; existence or nonexistence of a connection can be verified by finding boundaries like the ones sketched in \cref{fig:TrapRegion,fig:Barrier}, respectively. We seek such boundaries using the general methods of Section~\ref{sec:Methods}, whose formulations for the dynamical system~\cref{eq: scalar dyn syst} are made explicit in the next two subsections.

\floatsetup[figure]{style=plain,subcapbesideposition=top}	
\begin{figure}[tp] 
\centering
\sidesubfloat[]{\includegraphics[width=0.4\textwidth]{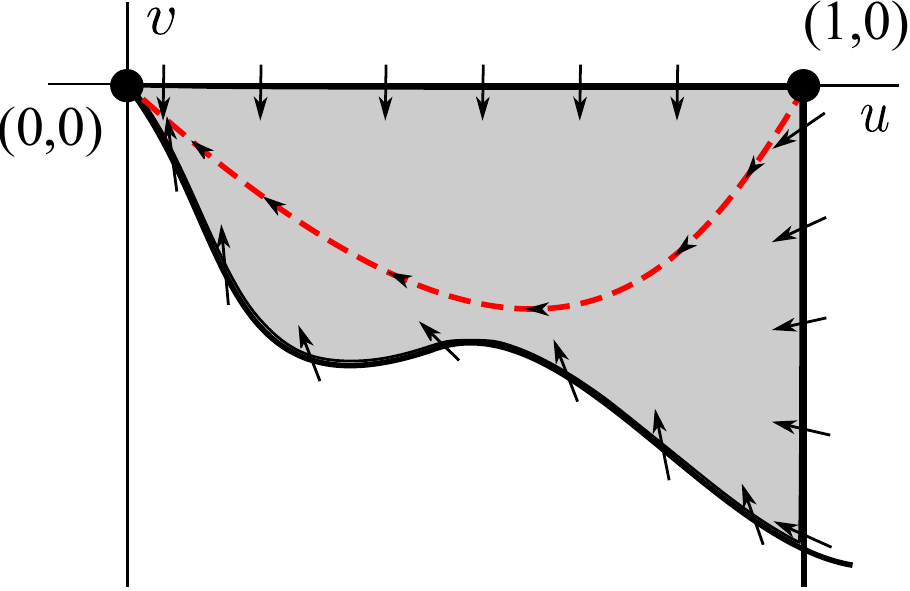} \label{fig:TrapRegion}}
\hfil
\sidesubfloat[]{\includegraphics[width=0.4\textwidth]{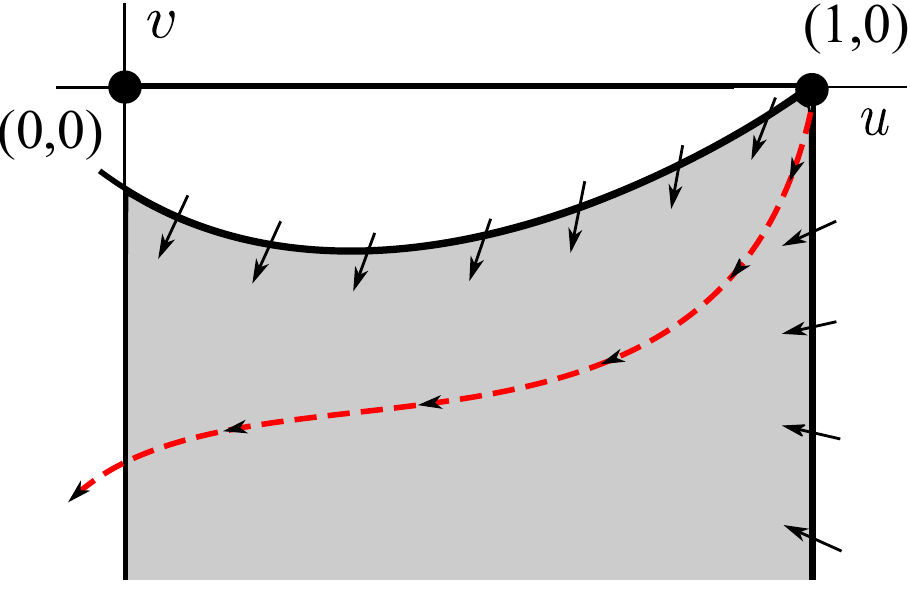} \label{fig:Barrier}}
\caption{(a) An example of a trapping region (shaded) implying that the unstable manifold (${\color{red}\dashedrule}$) of the equilibrium $(1,0)$ of~\cref{eq: scalar dyn syst} connects to $(0,0)$. (b) An example of a barrier that rules out such a connection because trajectories cannot leave the shaded region without leaving $\cS$.}
\label{fig:1DBnds}
\end{figure}


\subsection{\label{subsec:1DUpper}Upper bounds}

To show that the scalar RD equation~\cref{eq: general pde} admits a monotone travelling wave at a given speed $c$, meaning that $c$ is an upper bound on $c_*$, we must verify that the dynamical system~\cref{eq: scalar dyn syst} has a heteroclinic connection from $(1,0)$ to $(0,0)$. \Cref{fig:TrapRegion} qualitatively depicts this connection, along with a trapping region (shaded) that would imply its existence. The top and right boundaries of the shaded region are always trapping as indicated, but it remains to find a bottom boundary. Such a bottom boundary can be sought using the broadly applicable volume method of \Cref{subsec:Volume}. It also can be sought using the surface method of Section~\ref{subsec:Surface}, whose applicability is much narrower but includes systems of the form~\cref{eq: scalar dyn syst} when the desired boundary is a graph that trajectories cross in the positive-$v$ orientation.

To apply the surface method we seek a surface $v=N(u)$ that trajectories within $\cS$ cross in only the positive-$v$ orientation, like the bottom boundary in \cref{fig:TrapRegion}. To verify existence of a heteroclinic connection it suffices to find $N(u)$ such that
\begin{subequations}
\label{eq: N scalar}
\begin{align}
\hspace{-4pt}-y_1^2 \int_0^{u} [D(s)f(s)+(c+a(s))N(s)]\, \drm s + 2y_1y_2N(u) + 2y_2^2 \geq 0 & ~~ \forall~(u,y_1,y_2)\in [0,1]\times \R^2, \label{eq: N scalar 1} \\
-N(u) \ge 0 & ~~ \forall~u\in[0,1], \label{eq: N scalar 2} \\
N(0)=0. \label{eq: N scalar 3}
\end{align}
\end{subequations}
The first constraint is the surface condition~\cref{eq: Nineq}, applied to~\cref{eq: scalar dyn syst} with $\zv=u_1=u$, $\bF_1=0$, $F_2=-D(u)f(u)$, $F_3=-[c+a(u)]$, and $Z=[0,1]$. This implies that trajectories are trapped above the $v=N(u)$ surface, but further constraints on the geometry of this surface are needed to imply a heteroclinic connection. The second and third constraints ensure that the $v=N(u)$ surface passes through the $(0,0)$ equilibrium and lies below the $v=0$ trapping surface, as in \cref{fig:TrapRegion}. This geometry, combined with the monotonicity $\dot u\le 0$ inside $\cS$, implies that the unstable manifold of $(1,0)$ connects to $(0,0)$.

As an alternative to the surface method, the volume method of Section~\ref{subsec:Volume} also can be used to find a trapping boundary like the bottom one in \cref{fig:TrapRegion}. In this case the boundary is defined as the zero level set of a function $V(u,v)$, rather than by $v=N(u)$. It suffices for there to exist $V(u,v)$ and $\lambda,\eps,h>0$ such that
\begin{subequations}
\label{eq: V UB scalar}
\begin{align}
\lambda \left( \big[(c + a(u))v + D(u)f(u) \big]\tfrac{\partial V}{\partial v} - v\tfrac{\partial V}{\partial u}\right) - V(u,v) \ge 0 &\quad \forall~(u,v)\in\cS, \label{eq: V UB scalar 1} \\
-V(u,0) - \eps u (1-u) \ge 0&\quad \forall~u \in [0,1], \label{eq: V UB scalar 2} \\
V(0,v) + \eps v \ge 0&\quad \forall~v \in [-h,0], \label{eq: V UB scalar 3} \\
V(u,-h) \geq 0 &\quad\forall~u \in [0,1], \label{eq: V UB scalar 4} \\
V(0,0) = 0. \label{eq: V UB scalar 5}
\end{align}
\end{subequations}
The first constraint, which is the volume condition~\cref{eq: Trapping} for the dynamical system~\cref{eq: scalar dyn syst}, guarantees that all trajectories staying in $\cS$ are trapped in the negative-$V$ region. The remaining constraints ensure that this region has a geometry as in \cref{fig:TrapRegion}. Because of~\cref{eq: V UB scalar 2,eq: V UB scalar 3,eq: V UB scalar 4}, the rectangle $[0,1]\times[-h,0]$ has $V\le0$ on its top and $V\ge0$ on its left and bottom, and these inequalities are strict on the top and left except at the corners. (For many RD equations the form of~\cref{eq: scalar dyn syst} ensures that $\dot v\ge 0$ on the rectangle's bottom for sufficiently large $h$, in which cases the constraint \cref{eq: V UB scalar 3} can be dropped and the parameter $h$ does not enter.) These facts along with~\cref{eq: V UB scalar 5} mean the $V(u,v)=0$ level set must connect $(0,0)$ to the right boundary of $\cS$. Thus the zero-$V$ curve forms the bottom of a trapping region as in \Cref{fig:TrapRegion}, and the monotonicity $\dot u\le 0$ in this region guarantees a heteroclinic connection from $(1,0)$ to $(0,0)$.

The scalar RD equation~\cref{eq: general pde} is verified to have a travelling wave at a specified speed $c$ if we can find a function $N(u)$ satisfying~\cref{eq: N scalar} or a function $V(u,v)$ and constants $\lambda,\eps,h>0$ satisfying~\cref{eq: V UB scalar}. Assuming all terms in the RD equation are polynomial, we can search for polynomial $N$ or $V$  computationally using SOS methods. This is possible only because $N$ and $V$ appear linearly in their respective constraints. Using the standard approach described in \Cref{subsec:SOS}, in the surface method we replace \cref{eq: N scalar 1,eq: N scalar 2} with sufficient SOS conditions, and in the volume method we replace \crefrange{eq: V UB scalar 1}{eq: V UB scalar 4} with SOS conditions. For each SOS computation in the volume method, one must specify not only the value of $c$ but also $\eps$ and $\lambda$, as well as $h$ in cases where the constraint~\cref{eq: V UB scalar 4} cannot be dropped. It is advantageous to choose $\eps$ as small as possible while remaining larger than numerical error in SOS computations; here we fix $\eps=10^{-4}$. The optimal choice of $\lambda$ depends on the particular RD equation being studied and is explored in the examples below. There is no choice of $h$ in these examples because~\cref{eq: V UB scalar 4} can be dropped. With either the surface or volume method, we seek the smallest possible upper bound on $c_*$ by repeating SOS computations at different $c$ values to find the smallest value at which the existence of a travelling wave can be verified. In~\Cref{subsec: Fisher} we report upper bounds on $c_*$ computed by SOS implementations of both the surface and volume methods. The surface method works at least as well and is simpler to implement, so in \Cref{subsec: chemotaxis} we report upper bounds from the surface method only.


\subsection{Lower bounds}\label{subsec:1DLower}

To show that the scalar RD equation~\cref{eq: general pde} does not admit a monotone travelling wave at a given speed $c$, meaning that $c$ is a lower bound on $c_*$, we must verify that the dynamical system~\cref{eq: scalar dyn syst} cannot have a heteroclinic connection from $(1,0)$ to $(0,0)$. This can be done by finding a trapping boundary like the top of the shaded region in \cref{fig:Barrier}. This boundary, unlike the one in \Cref{fig:TrapRegion}, cannot be found using the surface method since~\cref{eq: scalar dyn syst} has the form~\cref{eq: SurfaceODE} for which a surface $v=N(u)$ may be found that traps trajectories above itself, not below. Thus we use the volume method.

To verify that no heteroclinic connection exists inside $\cS$, it suffices to find $V(u,v)$ and $\lambda,\eps>0$ such that
\begin{subequations}
\label{eq: V LB scalar}
\begin{align}
\lambda \left[ \big([c + a(u)]v + D(u)f(u) \big)\tfrac{\partial V}{\partial v}(u,v) - v\tfrac{\partial V}{\partial u}(u,v)\right] - V(u,v) \ge 0 &\quad \forall~(u,v)\in\cS, \label{eq: V LB scalar 1} \\
V(u,0) - \eps(1-u)\ge0 &\quad \forall~u \in [0,1], \label{eq: V LB scalar 2} \\
V(0,-\eps) = V(1,0) = 0. \label{eq: V LB scalar 3}
\end{align}
\end{subequations}
The first constraint is the same as in~\cref{eq: V UB scalar}, ensuring that trajectories are trapped in the negative-$V$ region. The remaining constraints are different, implying that the $V(u,v)=0$ barrier has a geometry as in \Cref{fig:Barrier} rather than~\ref{fig:TrapRegion}. In particular, the barrier passes below $(0,0)$ and through $(1,0)$ without leaving $\cS$, and $(0,0)$ lies on the positive-$V$ side of the barrier. To use computational SOS methods to find polynomial $V(u,v)$ satisfying~\cref{eq: V LB scalar}, we replace \cref{eq: V LB scalar 1,eq: V LB scalar 2} with sufficient SOS conditions by the approach described in \cref{sec:Methods}\ref{subsec:SOS}. To find the largest possible lower bound on $c_*$, we repeat the SOS computations at different $c$ values to find the largest value at which a travelling wave can be ruled out. Results of such computations giving lower bounds on $c_*$ for a generalized Fisher--KPP equation and a chemotaxis model are reported in the next two subsections.


\subsection{\label{subsec: Fisher}Application to a generalized Fisher--KPP equation}

To demonstrate the application of our methods to scalar RD equations of the form~\cref{eq: general pde}, we first consider a generalized Fisher--KPP equation~\cite{Billingham,Fisher,KPP},
\begin{equation}
\label{Fisher}
	u_t = u_{xx} + u^m(1-u),
\end{equation}
with parameter $m\ge1$. The well understood $m=1$ case is a classic example in the study of travelling wave solutions to RD equations, and generalizations to $m>1$ have been considered to model order-$m$ autocatalytic reactions~\cite{Billingham} and weak Allee effects in spatial ecology~\cite{Courchamp}. The dynamical system~\cref{eq: scalar dyn syst} governing travelling waves in this case is
\begin{equation}
\label{mFisherODE}
	\begin{split}
		\dot{u} &= v, \\
		\dot{v} &= -cv - u^m(1-u).
	\end{split}
\end{equation} 
Since the present example has $D(u)\equiv1$ in~\cref{eq: general pde}, the independent variable is simply $\xi = x - ct$, and the phase vector is $(u,u_\xi)$.

Travelling waves solutions of~\cref{Fisher} exist if and only if their speed exceeds some minimum speed $c_*$~\cite{Billingham}, which depends on $m$, but exact value of $c_*$ are known only when $m=1$ or $m=2$. Analytical upper bounds on $c_*$ have been derived in the past using linear trapping boundaries in the $(u,u_\xi)$-plane, which amounts to our surface method with $v=N(u)$ being simply a line. In the classic $m=1$ case, the exact value $c_*=2$ is known~\cite{Fisher,KPP,Billingham} because a linear trapping boundary verifies existence of a monotone travelling wave when $c\geq2$, and linearization of~\cref{mFisherODE} shows that all trajectories approaching $(0,0)$ as $\xi \to \infty$ must leave the region $\cS$ defined by~\cref{eq: S} for any $c<2$. In the $m=2$ case the sharp value $c_*=1/\sqrt{2}$ is known also, with trapping region arguments using the stable manifold of the origin to give sharp upper and lower bounds~\cite{Billingham2}. For general $m>1$, using linear trapping boundaries to verify existence of monotone travelling waves gives the upper bound $c_* \leq 2m^{-m/2}(m-1)^{(m-1)/2}$ \cite{Britton, Billingham}. These upper bounds cannot be sharp since strictly slower waves emerge when the governing RD equations are numerically integrated~\cite{Billingham}. Nonzero lower bounds have not been reported; linearization of~\cref{mFisherODE} about $(0,0)$ cannot rule out monotone travelling waves at any positive $c$ because the point is nonhyperbolic when $m>1$. \Cref{lem:FisherBnd} below gives a sharper upper bound on $c_*$, derived analytically by optimizing the trapping surface $v=N(u)$ among linear $N$. Then we turn to SOS computations for sharper upper and lower bounds, at least for integer values of $m$.

\begin{lem}
\label{lem:FisherBnd}
	For every $m > 1$, the minimum speed of monotone travelling waves in~\cref{Fisher} asymptotically connecting $u(-\infty) = 1$ and $u(\infty) = 0$ is bounded above according to
	\begin{equation}
	\label{lemFisher}
		c_* \leq 2\sqrt{\frac{2[(m-1)(m+2)]^{m-1}}{[m(m+1)]^m}}.
	\end{equation}
\end{lem}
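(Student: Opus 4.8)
The plan is to apply the surface method of Section~\ref{subsec:Surface}, specialized to the dynamical system~\cref{mFisherODE}, with the trapping surface $v = N(u)$ taken to be linear. Since the generalized Fisher--KPP equation~\cref{Fisher} has $D(u)\equiv 1$, $a(u)\equiv 0$, and $f(u)=u^m(1-u)$, the sufficient conditions~\cref{eq: N scalar} for existence of a monotone travelling wave at speed $c$ reduce to finding $N(u)$ with $N(0)=0$, $N(u)\le 0$ on $[0,1]$, and
\begin{equation*}
-y_1^2\int_0^u\bigl[s^m(1-s)+cN(s)\bigr]\,\drm s + 2y_1y_2 N(u) + 2y_2^2 \ge 0
\quad\forall\,(u,y_1,y_2)\in[0,1]\times\R^2.
\end{equation*}
Equivalently, by the Schur-complement reasoning in the derivation of~\cref{eq: Nineq}, this is the requirement that $-2\int_0^u[s^m(1-s)+cN(s)]\,\drm s \ge N(u)^2$ on $[0,1]$.

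First I would make the ansatz $N(u) = -\beta u$ for a constant $\beta>0$ to be chosen, which automatically satisfies $N(0)=0$ and $N\le 0$ on $[0,1]$. Substituting and carrying out the integral $\int_0^u s^m(1-s)\,\drm s = \tfrac{u^{m+1}}{m+1}-\tfrac{u^{m+2}}{m+2}$ turns the remaining inequality into a polynomial-in-$u$ inequality of the form
\begin{equation*}
c\beta u^2 - \frac{2u^{m+1}}{m+1} + \frac{2u^{m+2}}{m+2} - \beta^2 u^2 \ge 0
\quad\forall\,u\in[0,1].
\end{equation*}
Dividing by $u^2$ (legitimate for $u>0$, and the $u=0$ case is trivial) reduces this to requiring $g(u) := c\beta - \beta^2 - \tfrac{2}{m+1}u^{m-1} + \tfrac{2}{m+2}u^m \ge 0$ on $(0,1]$. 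I would then analyze $g$: its derivative shows the negative contribution $-\tfrac{2}{m+1}u^{m-1}+\tfrac{2}{m+2}u^m$ is minimized at an interior point $u_0 = \tfrac{(m-1)(m+2)}{m(m+1)}$ (one checks $u_0\in(0,1)$ for $m>1$), so the worst case of the inequality is at $u=u_0$. Imposing $g(u_0)\ge 0$ yields a lower bound on the quantity $c\beta-\beta^2$ in terms of $m$, namely $c\beta - \beta^2 \ge \tfrac{2}{m+1}u_0^{m-1}-\tfrac{2}{m+2}u_0^m$, and evaluating the right-hand side at $u_0$ and simplifying gives it as $\tfrac{2[(m-1)(m+2)]^{m-1}}{[m(m+1)]^m}$ after collecting the powers.

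Finally, for the surface condition to be satisfiable at speed $c$ we need \emph{some} $\beta>0$ with $c\beta-\beta^2 \ge \tfrac{2[(m-1)(m+2)]^{m-1}}{[m(m+1)]^m}$; since $\max_{\beta>0}(c\beta-\beta^2) = c^2/4$, attained at $\beta = c/2$, this is possible exactly when $c^2/4 \ge \tfrac{2[(m-1)(m+2)]^{m-1}}{[m(m+1)]^m}$, i.e.\ when $c \ge 2\sqrt{2[(m-1)(m+2)]^{m-1}/[m(m+1)]^m}$, which is precisely the bound~\cref{lemFisher}. By the discussion of the surface method in \Cref{subsec:1DUpper}, existence of such an $N$ at speed $c$ certifies a heteroclinic connection from $(1,0)$ to $(0,0)$ in~\cref{mFisherODE}, hence a monotone travelling wave of~\cref{Fisher} at speed $c$, so $c_*$ is bounded above by this value. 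The main obstacle I anticipate is purely computational bookkeeping: correctly locating the minimizer $u_0$ of the relevant one-variable function, verifying $u_0\in(0,1)$ for all $m>1$, and carefully collecting exponents so that the evaluated minimum takes the clean closed form appearing in~\cref{lemFisher}; the structural argument (linear ansatz, reduction via the Schur complement, then optimizing over the slope $\beta$) is otherwise routine.
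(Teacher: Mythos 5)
Your proposal is correct and follows essentially the same argument as the paper: a linear ansatz $N(u)=-\eta u$ in the surface condition~\cref{eq: N scalar}, division by $u^2$, maximization of the resulting one-variable expression at $u_0=\tfrac{(m-1)(m+2)}{m(m+1)}$, and the optimal slope $\eta=c/2$ giving $c^2/4$ on the other side. The only cosmetic difference is that you derive $\beta=c/2$ by explicitly maximizing $c\beta-\beta^2$, whereas the paper simply chooses it; the computations and the resulting bound are identical.
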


\begin{proof}
We show existence of a heteroclinic connection in~\cref{mFisherODE} using the surface method with linear $N(u)$. Let $N = -\eta u$ with $\eta > 0$ to be chosen. This $N$ satisfies the latter two constraints in~\cref{eq: N scalar}, while the first constraint requires
	\begin{equation}
		\frac{1}{m+2}u^{m+2} - \frac{1}{m+1}u^{m+1} + \frac{c \eta}{2}u^2 - \frac{\eta^2}{2}u^2 \geq 0	
	\end{equation}
for all $u \in [0,1]$. Factoring out $u^2$ gives the equivalent condition 
\begin{equation}
	\label{eq: eta cond}
	\frac{1}{m+1}u^{m-1} - \frac{1}{m+2}u^m \le \frac{c \eta}{2}- \frac{\eta^2}{2}.
\end{equation}
The maximum of the left-hand expression over $u\in[0,1]$ is $k(m):=\frac{[(m-1)(m+2)]^{m-1}}{[m(m+1)]^m}$. We choose $\eta=c/2$ so that the right-hand expression takes its largest possible value of $c^2/8$. The inequality~\cref{eq: eta cond} then holds for all $u\in[0,1]$ if and only if $c\ge \sqrt{8k(m)}$. The minimum such $c$ provides the upper bound~\cref{lemFisher}.
\end{proof} 

We have computed upper and lower bounds on $c_*$ by the SOS methods described in \Cref{subsec:1DUpper,subsec:1DLower}. To carry out such computations we choose a maximum polynomial degree for the auxiliary function $N(u)$ or $V(u,v)$. We also choose degrees for the additional tunable polynomials that are introduced when nonnegativity on a semialgebraic set is enforced using SOS conditions, as explained in \Cref{subsec:SOS} and exemplified by $\sigma_i$ in~\cref{eq: p sos}. In all computations we report for the RD equation~\cref{Fisher}, the degree of these additional polynomials was $m$ or $m-1$ larger than the degree of the auxiliary function, depending on which gives an even degree. For various degrees of the auxiliary functions, and for various $\lambda$ values in the case of the volume method, we have searched over $c$ for the best possible upper and lower bounds on $c_*$. 

\begin{table}[t] 
\centering
\caption{\label{FisherTable}Upper and lower bounds on the minimum speed of monotone travelling waves solutions of~\cref{Fisher} for various $m$. Upper bounds were computed using the surface method with $\deg(N)=~20$. Lower bounds were computed using the volume method with $\deg(V)=~20$ and $(\eps,\lambda) = (10^{-4},10^3)$. Also shown is the measured speed of emergent travelling waves in numerical integrations of the PDE.}
 \begin{tabular}{l c c c c c c c c c} 
 \hline
 $m$ & 2 & 3 & 4 & 5& 6 & 7 & 8 & 9 & 10 \\
 \hline
Upper bound & 0.7071 & 0.4632 & 0.3467 & 0.2776 & 0.2317 & 0.1989 & 0.1742 & 0.1550 & 0.1397 \\
PDE integration & 0.7071 & 0.4632 & 0.3467 & 0.2776 & 0.2316 & 0.1988 & 0.1742 & 0.1550 & 0.1397 \\
Lower bound & 0.7068 & 0.4629 & 0.3465 & 0.2774 & 0.2315 & 0.1987 & 0.1741 & 0.1549 & 0.1395 \\
\end{tabular}
\end{table} 

Auxiliary functions of sufficiently high degree give upper and lower bounds on $c_*$ that are very close to sharp. \cref{FisherTable} illustrates this by showing, for various $m$ values, upper bounds computed using the surface method with $\deg(N)=20$ and lower bounds computed using the volume method with $\deg(V)=20$. Also shown in \cref{FisherTable} are the post-transient speeds of travelling waves that emerged when the RD equation~\cref{Fisher} was numerically integrated using a second-order finite difference scheme with a Heaviside function as the initial condition. In the $m = 1$ case  it has been proved that this initial condition converges to the minimum-speed travelling wave~\cite{KPP}. Although the same result has not been proved for $m>1$, \cref{FisherTable} gives very strong evidence for it. Each pair of upper and lower bounds in \cref{FisherTable} agrees to 3 significant digits and is consistent with the $c_*$ suggested by numerical integration. We now turn to how such bounds on $c_*$ depend on the polynomial degrees of auxiliary functions and the value of $\lambda$.

\floatsetup[figure]{style=plain,subcapbesideposition=top}	
\begin{figure} 
\centering
\sidesubfloat[]{\includegraphics[trim={15pt 0pt 20pt 15pt},clip,width=0.45\textwidth]{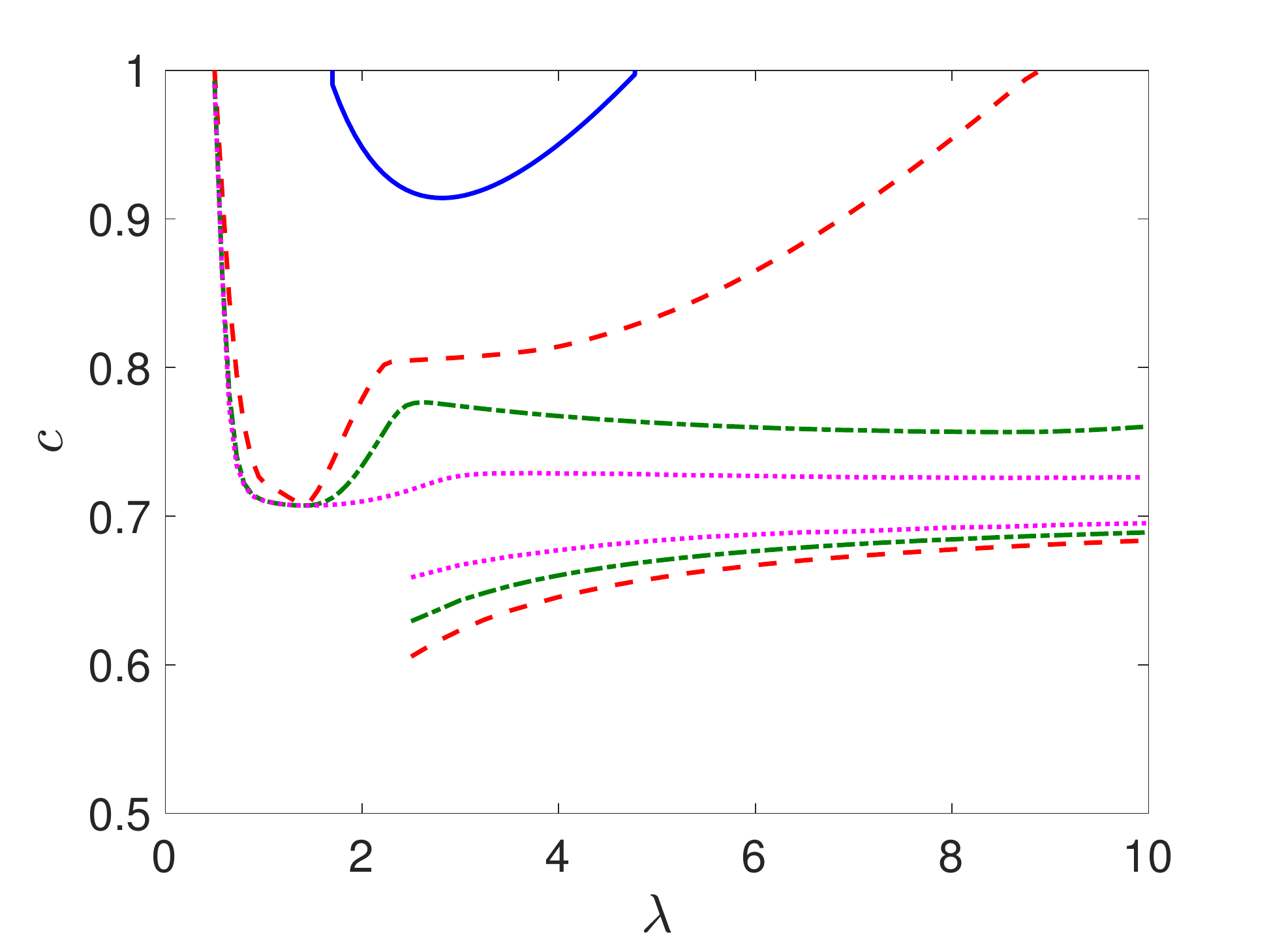}\label{fig:FisherLambda_a}}
\hfil
\sidesubfloat[]{\includegraphics[trim={15pt 0pt 20pt 15pt},clip,width=0.45\textwidth]{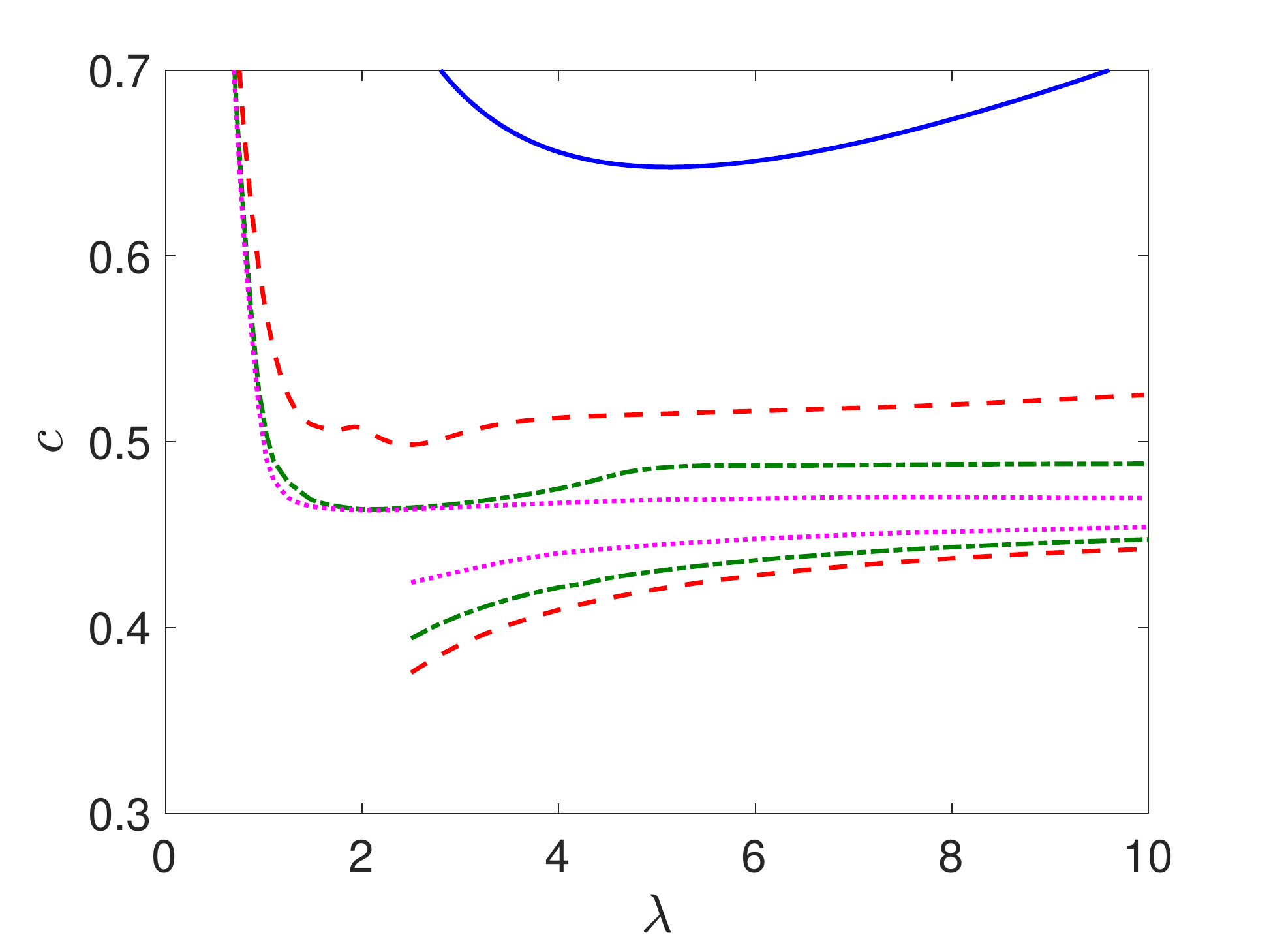}\label{fig:FisherLambda_b}}
\caption{Dependence on $\lambda$ of bounds on $c_*$ for the generalized Fisher--KPP equation \cref{Fisher} with (a) $m=2$ and (b) $m=3$, found by SOS computations using the volume method. Each pair of lines denotes upper and lower bounds computed with $\eps=10^{-4}$ and $\deg(V)=2$ ({\color{blue}$\solidrule$}), 3 ({\color{red}$\dashedrule$}), 4 ({\color{mygreen}$\dashdottedrule$}), and 5 ({\color{mypink}$\dottedrule$}). Lower bounds are not shown for $\lambda<2.5$ due to numerical inaccuracy. With $\deg(V)=2$, nonzero lower bounds were not found.}
\label{fig:FisherLambda}
\end{figure}

We have used the volume method to find both upper and lower bounds by computing polynomial $V(u,v)$ that satisfy the SOS relaxations of~\cref{eq: V UB scalar} and~\cref{eq: V LB scalar}, respectively. In the upper bound formulation the condition~\cref{eq: V UB scalar 4} is unneeded for the present example of~\cref{mFisherODE}, as explained after~\cref{eq: V UB scalar}. We fix $\eps=10^{-4}$ throughout; changing this value has only an order-$\eps$ effect on the optimal bound. To examine the effect of $\lambda$ in the $m=2,3$ cases we have computed the best possible upper and lower bounds over a range of $\lambda$ using $\deg(V)=2,3,4,5$. \Cref{fig:FisherLambda} shows the results. Upper bounds are minimized by intermediate values of $\lambda$, whereas lower bounds appear to asymptote to their suprema as $\lambda\to\infty$. We do not know how to anticipate $\lambda$-dependance in general, but bounds become insensitive to $\lambda$ as $\deg(V)$ increases, at least for $\lambda\gtrsim1$. In this example it therefore works well in practice to fix a sufficiently large value of $\lambda$ and carry out SOS computations with $V$ of successively higher degree until bounds become sharp. As an example we have computed the best possible upper and lower bounds with the fixed values $\lambda=3$ and $\lambda=10$, respectively, using $V$ of increasing degree. \cref{FisherTable2} summarizes the results for the $m=2,3,4,5$ cases, showing that upper and lower bounds converge quickly towards each other as $\deg(V)$ is raised.

\begin{table}[t] 
\centering
 \begin{tabular}{l c c c c c c c c} 
 \hline
 Degree & 1& 2 & 3 & 4 & 5 & 6 & 7 & 8 \\ 
 \hline
 Upper bound (vol.) & 1.0832 & 0.9150 & 0.8068 & 0.7740 & 0.7269 & 0.7161 & 0.7106 & 0.7097 \\ 
Upper bound (surf.) & 0.9428 & 0.7071 & 0.7071 & 0.7071 & 0.7071 & 0.7071 & 0.7071 & 0.7071 \\ 
Lower bound (vol.) & --- & --- & 0.6836 & 0.6891 & 0.6952 & 0.6957 & 0.6964 & 0.6968 
 \end{tabular}
 \begin{tabular}{l c c c c c c c c} 
 \hline
 Degree & 1& 2 & 3 & 4 & 5 & 6 & 7 & 8\\ 
 \hline
Upper bound (vol.)& 0.7698 & 0.6879 & 0.5035 & 0.4665 & 0.4646 & 0.4632 & 0.4628 & 0.4627\\ 
Upper bound (surf.) & 0.6804 & 0.5102 & 0.4667 & 0.4637 & 0.4632 & 0.4632 & 0.4632 & 0.4632\\ 
Lower bound (vol.) & --- & --- & 0.4423 & 0.4475 & 0.4541 & 0.4526 & 0.4558 & 0.4561\\ 
 \end{tabular}
 \begin{tabular}{l c c c c c c c c} 
 \hline
 Degree & 1& 2 & 3 & 4 & 5 & 6 & 7 & 8\\ 
 \hline
 Upper bound (vol.) & 0.6495 & 0.6021 & 0.3973 & 0.3477 & 0.3466 & 0.3465 & 0.3465 & 0.3465\\ 
Upper bound (surf.) & 0.5400 & 0.4048 & 0.3493 & 0.3474 & 0.3469 & 0.3467 & 0.3467 & 0.3467\\ 
Lower bound (vol.) & --- & --- & 0.3162 & 0.3276 & 0.3375 & 0.3382 & 0.3408 & 0.3408\\ 
\end{tabular}
 \begin{tabular}{l c c c c c c c c} 
 \hline
 Degree & 1& 2 & 3 & 4 & 5 & 6 & 7 & 8 \\ 
 \hline
 Upper bound (vol.)& 0.5724 & 0.5550 & 0.3348 & 0.2844 & 0.2775 & 0.2775 & 0.2775 & 0.2775\\ 
Upper bound (surf.) & 0.4498 & 0.3372 & 0.2853 & 0.2779 & 0.2776 & 0.2776 & 0.2776 & 0.2776\\ 
Lower bound (vol.) & --- & --- & 0.2356 & 0.2523 & 0.2683 & 0.2695 & 0.2720 & 0.2720\\ 
 \end{tabular}
 \caption{\label{FisherTable2}Numerically computed upper and lower bounds on the minimum speed $c_*$ of monotone travelling waves of~\cref{Fisher} for $m = 2,3,4,5$ (from top to bottom). Degrees of $N$ in the surface method and of $V$ in the volume method range from 1 to 8. In the volume method $\eps=10^{-4}$, and $\lambda=3$ or 10 for upper or lower bounds, respectively. Lower bounds are not reported for $\deg(V)\le2$ because computations do not verify nonexistence of travelling waves for any positive $c$ values. Results of the surface method with $\deg(N)=1$ agree with the analytical expression \cref{lemFisher}. Surface method bounds are sharp to the tabulated precision when $\deg(V)\ge6$ (cf.\ \cref{FisherTable}).}
\end{table}  

We computed upper bounds on $c_*$ for~\cref{Fisher} using the surface method as well, computing polynomial $N(u)$ that satisfy the SOS relaxation of~\cref{eq: N scalar}. \Cref{FisherTable2} shows the best upper bounds computed in the $m=2,3,4,5$ cases using $N$ of various polynomial degrees. These bounds become sharp as $\deg(N)$ is raised, and the convergence is faster than the convergence of the volume method when $\deg(V)$ is raised with the suboptimal value $\lambda=3$ fixed. This is an apparent advantage of the surface method for computing upper bounds. The surface method also is simpler to implement because the volume method requires the values of $\varepsilon$ and $\lambda$ to be either chosen \emph{a priori} or optimized via repeated SOS computations. We therefore use only the surface method to compute upper bounds in the next example, although the volume method still is needed to compute lower bounds.


\subsection{\label{subsec: chemotaxis}Application to a chemotaxis model}

To further illustrate the success of our methods for finding minimum wave speeds in scalar RD equations, we consider a model from~\cite{Mansour} for chemotaxis of bacteria in a one-dimensional strip. The density $u(x,t)$ of bacteria is modelled by
\begin{equation}
\label{Chemo}
	u_t = (u^ku_x)_x - b u u_x + u(1-u^q)
\end{equation}
with parameters $k,q > 0$ and $b \geq 0$. It was shown in~\cite{Mansour} that there is a finite minimum speed $c_*$ above which monotone travelling waves exist. The exact value of $c_*$, which depends on the parameters, was not found. Here we produce upper and lower bounds on $c_*$ using SOS computations, and we derive upper bounds analytically.

In the case of \cref{Chemo}, the dynamical system~\cref{eq: scalar dyn syst} governing travelling waves takes the form
\begin{equation}
\label{mChemoODE}
	\begin{split}
		\dot{u} &= v, \\
		\dot{v} &= (b u - c)v - u^{k+1}(1-u^q).
	\end{split}
\end{equation}
Analytical estimates of $c_*$ are available only for certain values of the parameters. When $(q,b)=(1,0)$, the value of $c_*$ for the chemotaxis equation~\cref{Chemo} is the same as for the Fisher--KPP equation~\cref{Fisher} with $m=k+1$, as follows from the equivalence of the dynamical systems~\cref{mFisherODE} and~\cref{mChemoODE}. Here we provide an analytical upper bound on $c_*$ for all $(k,q,b)$, although it is not generally sharp. As done for the Fisher--KPP equation in \cref{lem:FisherBnd} above, we derive the bound using the surface method with a linear trapping boundary $v=N(u)$. \Cref{lem:ChemoBnd} below states the result, whose proof we omit because it is analogous to the proof of~\cref{lem:FisherBnd}.

\begin{lem}
\label{lem:ChemoBnd}
	For every $k,q > 0$ and $b \geq 0$, the minimum speed of monotone travelling waves in \cref{Chemo} asymptotically connecting $u(-\infty) = 1$ to $u(\infty) = 0$ is bounded above according to
	\begin{equation}
		c_* \leq \frac{2b}{3} + 2\sqrt{\frac{2qk^\frac{k}{q}(k+q+2)^\frac{k}{q}}{(k + 2)^{1 + \frac{k}{q}}(k+q)^{1 + \frac{k}{q}}}}.
	\end{equation}
\end{lem}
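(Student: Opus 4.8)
The plan is to reuse the argument of \cref{lem:FisherBnd} essentially verbatim: verify the existence of a monotone travelling wave at speed $c$ by exhibiting a \emph{linear} trapping surface $v=N(u)=-\eta u$, $\eta>0$, satisfying the conditions \cref{eq: N scalar 1,eq: N scalar 2,eq: N scalar 3} for the dynamical system \cref{mChemoODE}, and then choosing $\eta$ and $c$ so that those conditions hold. The algebraic conditions \cref{eq: N scalar 2,eq: N scalar 3} are immediate for $N(u)=-\eta u$, so the whole content sits in the integral inequality \cref{eq: N scalar 1}.

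First I would specialize \cref{eq: N scalar 1} to \cref{mChemoODE}, i.e.\ to \cref{eq: scalar dyn syst} with $D(u)=u^k$, $a(u)=-bu$, and $f(u)=u(1-u^q)$, and evaluate the antiderivative in closed form; since the integrand is a sum of monomials this is routine, and a factor $u^2$ can be pulled out. Exactly as in the passage leading to \cref{eq: N matrix}, the left side of \cref{eq: N scalar 1} is a quadratic form in $(y_1,y_2)$ whose nonnegativity for all $(y_1,y_2)$ is equivalent to positive semidefiniteness of a $2\times2$ matrix, and computing its Schur complement reduces everything to one scalar inequality in $u$. After dividing by $u^2$ it reads
\[
\frac{c\eta}{2}-\frac{\eta^2}{2}\;\ge\;\frac{u^{k}}{k+2}-\frac{u^{k+q}}{k+q+2}+\frac{b\eta}{3}\,u \qquad\text{for all }u\in[0,1].
\]

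The new ingredient relative to Fisher--KPP is the linear term $\tfrac{b\eta}{3}u$, present precisely because $a(u)=-bu$ is non-constant. I would dispose of it with the crude estimate $u\le1$, reducing the requirement to $\tfrac{c\eta}{2}-\tfrac{\eta^2}{2}-\tfrac{b\eta}{3}\ge\max_{u\in[0,1]}\big[\tfrac{u^{k}}{k+2}-\tfrac{u^{k+q}}{k+q+2}\big]$. A single-variable optimization (the maximum is attained at the interior point where $u^q=\tfrac{k(k+q+2)}{(k+q)(k+2)}$) shows the right side equals $\kappa:=\tfrac{q\,k^{k/q}(k+q+2)^{k/q}}{(k+2)^{1+k/q}(k+q)^{1+k/q}}$, the quantity under the radical in the statement. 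It then remains to optimize the left side over $\eta$: it equals $\eta\big(\tfrac{c}{2}-\tfrac{b}{3}\big)-\tfrac{\eta^2}{2}$, maximized at $\eta=\tfrac{c}{2}-\tfrac{b}{3}$ (positive at the speed we land on) with value $\tfrac12\big(\tfrac{c}{2}-\tfrac{b}{3}\big)^2$. Hence the surface condition is feasible exactly when $\tfrac12\big(\tfrac{c}{2}-\tfrac{b}{3}\big)^2\ge\kappa$, i.e.\ when $c\ge\tfrac{2b}{3}+2\sqrt{2\kappa}$; combining this with the fact \cite{Mansour} that monotone waves exist at all speeds above a finite $c_*$ gives $c_*\le\tfrac{2b}{3}+2\sqrt{2\kappa}$, which is the claim.

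I expect the only genuine decision to be how to treat the $\tfrac{b\eta}{3}u$ term: replacing $u$ by $1$ decouples the $b$-dependence and produces the clean closed form, but it is also what makes the bound non-sharp in general, the honest alternative being a joint maximization in $u$ of a three-term expression with no elementary critical point. Everything else is the same Schur-complement-plus-elementary-calculus template as in \cref{lem:FisherBnd}.
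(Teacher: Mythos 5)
Your proposal is correct and is precisely the argument the paper intends: the paper omits the proof of \cref{lem:ChemoBnd} as ``analogous to the proof of \cref{lem:FisherBnd},'' and your specialization of \cref{eq: N scalar} to $N(u)=-\eta u$ with $D(u)=u^k$, $a(u)=-bu$, $f(u)=u(1-u^q)$, the bound $u\le 1$ on the $\tfrac{b\eta}{3}u$ term, and the optimizations over $u$ and $\eta$ reproduce the stated bound exactly (and correctly reduce to \cref{lemFisher} when $b=0$, $q=1$, $m=k+1$).
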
 

Minimum wave speeds of~\cref{Chemo} are unknown for most parameter values. As a test of our methods, we have used SOS computations to find bounds on $c_*$ with $(k,b)=(2,1)$ and integer $q$ values from 1 to 9. We use the surface method with $\deg(N)=20$ for upper bounds and the volume method with $\deg(V)=20$ for lower bounds. The additional polynomials introduced to enforce nonnegativity on semialgebraic sets via SOS conditions---i.e., the $\sigma_i$ in~\cref{eq: p sos} and their analogues---all had degree $20+q$ or $20+q-1$ depending on whichever is even. For the volume method we fixed $\eps=10^{-4}$ and $\lambda=10^3$. \Cref{ChemoTable} reports the resulting bounds. All upper and lower bounds agree to three digits. In the $q=1$ case, the value $c_*\approx0.8239$ was estimated in~\cite{Mansour} by finding heteroclinic connections in~\cref{mChemoODE} using a shooting method, and our upper bound agrees with this value to all 4 digits.

\begin{table}[t] 
\centering
 \begin{tabular}{l c c c c c c c c c c} 
 \hline
 $q$ & 1 & 2 & 3 & 4 & 5& 6 & 7 & 8 & 9 \\
 \hline
Upper bound & 0.8239 & 0.9433 & 1.0120 & 1.0557 & 1.0967 & 1.1106 & 1.1289 & 1.1435 & 1.1554 \\ 
Lower bound & 0.8236 & 0.9429 & 1.0104 & 1.0546 & 1.0856 & 1.1095 & 1.1274 & 1.1422 & 1.1550 \\ 
 \end{tabular}
  \caption{Upper and lower bounds on the minimum speed of monotone travelling waves of (\ref{Chemo}) for $(k,b) = (2,1)$ and various $q$, found using SOS computations with auxiliary functions of polynomial degree 20 (see text).}
   \label{ChemoTable}
\end{table}


\section{\label{sec:Multi}Application to a two-component cubic autocatalysis model}

In order to verify the existence or nonexistence of travelling waves in multicomponent RD equations using the methods of \Cref{sec:Methods}, one must confront difficulties that are absent in the case of scalar RD equations. The dynamical system~\cref{eq: dyn syst} governing travelling waves will have a dimension of at least three. Because the Poincar\'e--Bendixson theorem no longer applies, heteroclinic connections cannot be verified solely by finding trapping boundaries and ruling out periodic orbits; additional arguments based on monotonicity are needed. Whereas the case of scalar RD equations was handled in generality in \Cref{sec:Methods}, we do not give a general treatment of the multicomponent case. Instead we illustrate the application of our methods to an example of a two-component RD equation from the recent literature. 

We consider an autocatalytic chemical reaction model that has received considerable attention~\cite{Billingham1991a, Billingham1991b, Chen, Chen2, Focant, Hosono, Qi},
\begin{equation}
\label{Chemical}
	\begin{split}
		\alpha_t &= \alpha_{xx} - \alpha\beta^m, \\
		\beta_t &= D\beta_{xx} + \alpha\beta^m,
	\end{split}
\end{equation}
with parameters $m\geq 1$ and $D > 0$. Most studies have focused on quadratic ($m=1$) or cubic ($m=2$) autocatalysis. Here we report computations for cubic autocatalysis, which is the less understood of the two cases. Since $\alpha(x,t)$ and $\beta(x,t)$ model chemical concentrations, we restrict attention to travelling waves where these functions are nonnegative, in particular waves where $(\alpha,\beta) \to (1,0)$ as $\xi \to -\infty$ and $(\alpha,\beta) \to (0,1)$ as $\xi\to \infty$. It has been proved for all $D>0$ and $m\ge1$ that such waves exist if and only if their speeds exceed some minimum $c_*$~\cite{Chen}, but exact values of $c_*$ are known only when $D=1$ or $m=1$. When $D = 1$ the existence of nonnegative travelling waves in~\cref{Chemical} is equivalent to that of nonnegative travelling waves in the generalized Fisher--KPP equation~\cref{Fisher}, as detailed in the introduction of~\cite{Billingham}.

Bounds on $c_*$ have been derived analytically which depend on $D>0$ and $m>1$~\cite{Billingham1991a,Chen2,Focant,Qi}, but there is a gap between the best upper and lower bounds when $D\neq1$. In the $m=2$ case on which we focus, the best upper and lower bounds that have been proved analytically are
\begin{equation}
\label{eq: m=2 analyt}
\begin{aligned}
\underline{D<1}:\quad\frac{D}{\sqrt2} \le c_* &\le \min\bigg\{\frac{4D}{\sqrt{1 + 4D}},\sqrt{D}\bigg\}, & 
	\hspace{20pt}\underline{D>1}:\quad\sqrt{\frac{D}{2}} \le c_* &\le \sqrt{\frac{D}{1+1/D}}.
\end{aligned}
\end{equation}
All of these bounds are derived in~\cite{Chen2}, except for the upper bound $c_*\le\sqrt{D}$ from~\cite{Qi} which is the best available when $1/12<D<1$. Here we report sharper numerical bounds for many values of~$D$.

\subsection{\label{sec: autocat method}Formulations for computing bounds}

The existence of a travelling wave of speed $c$ in~\cref{Chemical} is equivalent the existence of a heteroclinic connection from $(u,v,w)=(0,0,0)$ to $(1,1,0)$ in the dynamical system
\begin{equation}
\label{ChemicalODE}
	\begin{split}
		\dot{u} &= D(v + w - u), \\
		\dot{v} &= w, \\
		\dot{w} &= -w + \frac{D}{c^2}u(1 - v)^m,
	\end{split}	
\end{equation}
where a dot denotes $\tfrac{\drm}{\drm\xi}$. See \cite{Chen} for the demonstration of this equivalence and the definitions of $u,v,w$, which make use of a conserved quantity. In particular, it has been shown \cite{Billingham1991a} that such a heteroclinic connection corresponding to a nonnegative travelling wave of~\cref{Chemical} must lie in the $D$-dependent region
\begin{equation} \label{eq: P1}
	\cP = \begin{cases} 
			\{(u,v,w):\ 0 \leq u \leq v \leq 1,\ w \geq 0\}, & D<1, \\
			\{(u,v,w):\ 0 \leq v \leq u \leq 1,\ w \geq 0\}, & D>1.
		\end{cases}
\end{equation}
If the heteroclinic connection exists it is part of the unstable manifold of the origin, which is partly characterized by the following lemma. We omit details of the straightforward proof: part (i) follows from the linearization of~\cref{ChemicalODE} at the origin, and part (ii) follows from the observation that all boundaries of $\cP$ aside from one are trapping.

\begin{lem}\label{lem:Chemical} 
In the dynamical system~\cref{ChemicalODE} with any $D\neq1$, $c > 0$, and $m \geq 1$, the equilibrium at the origin (i) has a one-dimensional unstable manifold that enters $\cP$ and (ii) can leave $\cP$ only by crossing the $v = 1$ boundary when $D < 1$ and the $u = 1$ boundary when $D > 1$.
\end{lem}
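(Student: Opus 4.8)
The plan is to verify both parts of \Cref{lem:Chemical} by direct examination of the dynamical system~\cref{ChemicalODE} near the origin and on the boundary faces of the region $\cP$ defined in~\cref{eq: P1}.

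\textbf{Part (i): the unstable manifold at the origin.} First I would compute the Jacobian of the right-hand side of~\cref{ChemicalODE} at $(u,v,w)=(0,0,0)$. Since $\partial_u[\,\tfrac{D}{c^2}u(1-v)^m\,]|_0 = D/c^2$ and the mixed terms involving $v$ vanish to this order, the linearization is
\begin{equation*}
\begin{bmatrix}-D & D & D\\ 0 & 0 & 1\\ D/c^2 & 0 & -1\end{bmatrix}.
\end{equation*}
I would then argue that this matrix has exactly one eigenvalue with positive real part. The characteristic polynomial is cubic; evaluating it at $0$ gives a negative value (the constant term is $-D^2/c^2<0$), while it tends to $+\infty$ as the eigenvalue parameter $\to+\infty$, so there is a real positive root, and one checks (via the sign pattern / Routh--Hurwitz, or by noting the product of eigenvalues equals $-\det = -D^2/c^2<0$ while their sum is $-D-1<0$) that the remaining two eigenvalues have negative real part. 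Hence the origin has a one-dimensional unstable manifold. Finally I would check that the unique unstable eigenvector points into $\cP$: from the first and third rows of the Jacobian one reads off that along the positive-eigenvalue eigendirection the components $u,v,w$ can be taken all positive, and (using the eigen-equations) one verifies the ordering $u\le v$ when $D<1$ and $v\le u$ when $D>1$ holds infinitesimally, so the manifold enters the appropriate branch of $\cP$.

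\textbf{Part (ii): the boundary faces of $\cP$ are trapping except one.} Here I would go face by face, in each case computing the relevant component of $\bF$ (from~\cref{ChemicalODE}) on that face and checking its sign points inward. Take $D<1$ so $\cP=\{0\le u\le v\le1,\ w\ge0\}$ (the $D>1$ case is symmetric under swapping the roles of $u$ and $v$ with the sign of the $D(v+w-u)$ term reversed). On $\{w=0\}$: $\dot w = \tfrac{D}{c^2}u(1-v)^m\ge0$ since $0\le u$ and $v\le1$, so trajectories cannot exit downward in $w$. On $\{u=v\}$ (the "lower" constraint $u\le v$ becoming tight): $\tfrac{\drm}{\drm\xi}(v-u) = w - D(v+w-u) = w-Dw = (1-D)w\ge0$ since $D<1$ and $w\ge0$, so $v-u$ cannot become negative. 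On $\{u=0\}$: $\dot u = D(v+w)\ge0$ on $\cP$, so $u$ cannot go negative. On $\{v=0\}$: within $\cP$ the inequalities $0\le u\le v$ force $u=v=0$, so this face meets $\cP$ only along an edge already covered, or one simply notes $v=0$ together with $u\le v$ forces $u=0$ and then $\dot v = w\ge0$. That leaves only the face $\{v=1\}$, on which $\dot v = w$ has indefinite sign, so this is the one face through which trajectories may leave $\cP$. For $D>1$ the same computations with $u\leftrightarrow v$ show $\{u=1\}$ is the only non-trapping face.

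\textbf{Main obstacle.} Neither part is deep; the bookkeeping hazard is in part (i), namely confirming that the single positive eigenvalue really does exist for \emph{all} $c>0$, $m\ge1$, $D\neq1$ and that its eigenvector enters the correct branch of $\cP$ rather than the opposite one — the $D<1$ versus $D>1$ dichotomy in~\cref{eq: P1} must match the dichotomy coming from the eigenvector's $u$-versus-$v$ ordering. I would handle this by writing the eigen-equations explicitly, solving for the ratios $v/u$ and $w/u$ in terms of the positive eigenvalue $\mu$, and showing $v/u\gtrless 1$ according to $D\lessgtr1$; this is the one place where a short explicit computation is unavoidable, which is presumably why the authors describe the proof as "straightforward" and omit it. Everything in part (ii) is an immediate sign check once the faces of $\cP$ are enumerated.
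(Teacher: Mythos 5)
Your proposal follows exactly the route the paper takes — the paper in fact omits the details, stating only that part (i) follows from the linearization of~\cref{ChemicalODE} at the origin and part (ii) from observing that all boundary faces of $\cP$ except one are trapping — and your face-by-face sign checks and your plan to resolve the $D\lessgtr1$ dichotomy via the eigenvector ratios are sound. One slip to fix: the product of the eigenvalues is $\det J=+D^{2}/c^{2}>0$, not $-D^{2}/c^{2}<0$ (as stated, a negative product together with one positive eigenvalue would force a second positive one); the cleanest repair is to note that the characteristic polynomial factors as $(\mu+1)\bigl(\mu^{2}+D\mu-D^{2}/c^{2}\bigr)$, so the eigenvalues are $-1$ and $\tfrac{D}{2}\bigl(-1\pm\sqrt{1+4/c^{2}}\bigr)$, giving exactly one positive eigenvalue for all $D,c>0$, with unstable eigenvector satisfying $w=\mu v$ and $u/v=D(1+\mu)/(\mu+D)$, which is $\lessgtr1$ precisely when $D\lessgtr1$.
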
 

To show that some wave speed $c$ is an upper bound on $c_*$, we must verify that the unstable manifold of $(0,0,0)$ connects to $(1,1,0)$ inside $\cP$. Since~\cref{ChemicalODE} does not have the particular form~\cref{eq: SurfaceODE} required for the surface method, we use the volume method. (Alternatively, as described after~\cref{eq: SurfaceODE}, one could apply the surface method to a four-dimensional ODE governing travelling waves of~\eqref{Chemical}.) It suffices to find $V(u,v,w)$ and $\lambda,\eps>0$ satisfying
\begin{subequations}
\label{eq: upper autocat}
\begin{align}
-\lambda \bF(u,v,w)\cdot\nabla V(u,v,w) - V(u,v,w) \ge 0 &\quad \forall~(u,v,w)\in\cP, \label{eq: upper autocat 1} \\
-V(0,0,0) - \eps \geq 0, \label{eq: upper autocat 3} \\
V(1,1,0) = 0, \label{eq: upper autocat 4}
\end{align}
\end{subequations}
and
\begin{equation}
\label{eq: upper autocat 2}
\begin{array}{ll}
\underline{D<1}:\quad V(u,1,w) - \eps w \ge0 &\forall~(u,\cdot,w) \in \cP, \\ 
\underline{D>1}:\quad V(1,v,w) - \eps w \ge0 &\forall~(\cdot,v,w) \in \cP,
\end{array}
\end{equation}
where $\bF(u,v,w)$ is the right-hand side of the dynamical system~\cref{ChemicalODE}. The first constraint is simply the volume condition~\cref{eq: Trapping} for this dynamical system. It guarantees that all trajectories staying in $\cP$ are trapped in the negative-$V$ region. The second constraint ensures that the origin is inside this region, while the third puts the target equilibrium on the boundary. In the $D<1$ case, \cref{eq: upper autocat 2} implies that the $v = 1$ boundary of $\cP$, aside from the edge where $w=0$, is in the positive-$V$ region and so cannot be reached by the unstable manifold of the origin. In the $D > 1$ case, \cref{eq: upper autocat 2} implies the same for the $u=1$ boundary of $\cP$. It was proven in~\cite{Chen} that the unstable manifold of the origin can only have $w = 0$ in $\cP$ at $\xi = \pm\infty$, implying that if $V$ can be constructed to the above specifications, the unstable manifold cannot leave $\cP$ through $w = 0$. Hence, a function $V$ satisfying~\eqref{eq: upper autocat} guarantees that the unstable manifold of the origin must arrive at the equilibrium $(1,1,0)$ as $\xi \to \infty$. 

To show that some wave speed $c$ is a lower bound on $c_*$, we must verify that the unstable manifold of $(0,0,0)$ cannot connect to $(1,1,0)$ inside $\cP$. We use the volume method to seek a function $V(u,v,w)$ whose zero level set forms a suitable barrier. It suffices to find $V(u,v,w)$ and $\lambda,\eps>0$ such that
\begin{subequations}
\label{eq: lower autocat}
\begin{align}
-\lambda \bF(u,v,w)\cdot\nabla V(u,v,w) - V(u,v) \ge 0 &\quad \forall~(u,v,w)\in\cP, \label{eq: lower autocat 1} \\
V(u,v,0) - \eps(u + v) \ge0 &\quad \forall~(u,v,\cdot) \in \cP, \label{eq: lower autocat 2} \\
V(0,0,0) = 0, \label{eq: lower autocat 3}
\end{align}
\end{subequations}
where $\bF(u,v,w)$ is the right-hand side of the dynamical system~\cref{ChemicalODE}. The first constraint again guarantees that all trajectories staying in $\cP$ are trapped in the negative-$V$ region. The third constraint ensures that the origin is on the boundary of the trapping region. One half of the unstable manifold of the origin must enter the region where $V\le 0$ because otherwise, since the origin is a saddle, not all nearby trajectories could be trapped in that region. The second constraint in~\cref{eq: lower autocat} ensures that the equilibrium $(1,1,0)$ is outside of the trapping region, so it cannot be reached by the part of the origin's unstable manifold that enters $\cP$.

\subsection{\label{sec: autocat results}Computed bounds}

We have computed upper and lower bounds on $c_*$ using polynomial $V(u,v,w)$, replacing the nonnegativity constraints in~\cref{eq: upper autocat} and~\cref{eq: lower autocat} with SOS sufficient conditions as descried in \cref{sec:Methods}\ref{subsec:SOS}. Note that $\cP$ can be defined in the form~\cref{eq: U} of a semialgebraic set. In the $D<1$ case, say, this can be done by letting $s_1=u$, $s_2=v-u$, $s_3=1-v$, and $s_3=w$. This introduces additional tunable polynomials, such as the $\sigma_i$ in~\cref{eq: p sos}, whose degrees we fix to be the same as the degree of $V$. Fixing the degree of $V$, we find that the dependence of bounds on $\lambda$ is much as in the generalized Fisher--KPP example of \cref{fig:FisherLambda}: upper bounds are minimized at an order-one value of $\lambda$, while lower bounds are maximized as $\lambda\to\infty$.

\begin{figure}[t]
\centering
\includegraphics[trim={60pt 2pt 60pt 6pt},clip,width=.9\textwidth]{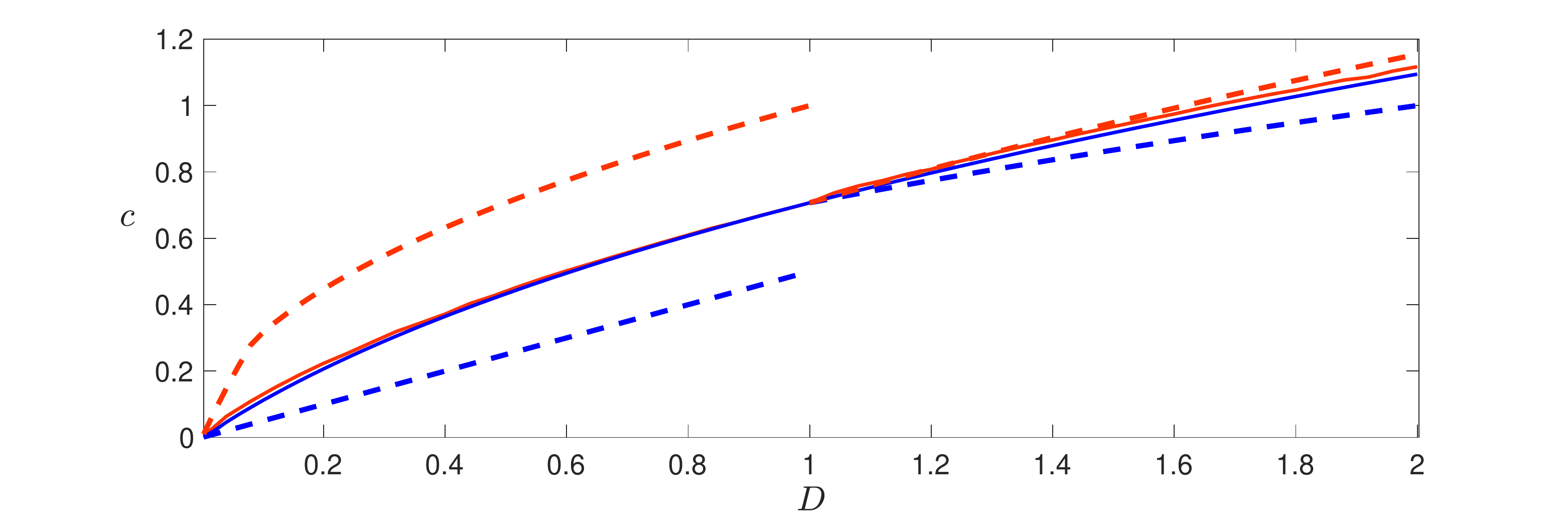} 
\caption{Upper (${\color{red}\blacksquare}$) and lower (${\color{blue}\blacksquare}$) bounds on $c_*$ for~\cref{Chemical}, from our SOS computations ($\solidrule$) and the analytical estimates~\cref{eq: m=2 analyt} ($\dashedrule$). Computations used $\deg(V)=6$ with $(\varepsilon,\lambda) = (10^{-4},0.5)$ for upper bounds and $(\varepsilon,\lambda) = (10^{-4},10^3)$ for lower bounds.}
\label{fig:ChemicalBnds}
\end{figure} 

\Cref{fig:ChemicalBnds} shows numerical upper and lower bounds on $c_*$ computed using the volume method via \cref{eq: upper autocat} and \cref{eq: lower autocat}, respectively, for $D\in(0,2]$. Also shown are the best bounds~\cref{eq: m=2 analyt} that have been proved analytically. The plotted numerical bounds were computed using $\deg(V) = 6$ and $\varepsilon = 10^{-4}$ with $\lambda=0.5$ for upper bounds and $\lambda=10^3$ for lower bounds. The upper and lower bounds from SOS computations nearly coincide in the figure, implying that both are very close to being sharp. They also show that the analytical bounds are not sharp, except when $D=0$ or $D=1$.

The exact dependence of $c_*$ on $D$ has not been found analytically, but its asymptotic scaling is $c_*\sim C_0D$ for $D\ll1$ and $c_*=C_\infty\sqrt{D}$ for $D\gg1$. These scalings were first noted in travelling waves that emerged when the RD system~\cref{Chemical} was numerically integrated, and they are guaranteed by the analytical bounds~\cref{eq: m=2 analyt}. The asymptotic constants $C_0$ and $C_\infty$ are not known exactly, although it follows from~\cref{eq: m=2 analyt} that $1/\sqrt2\le C_0\le4$ and $1/\sqrt2\le C_\infty\le 1$, and $C_0\approx1.219$ was suggested in~\cite{Billingham1991a} based on their numerical integration. In order to estimate $C_0$ and $C_\infty$ here, we have computed upper and lower bounds on $c_*$ that are even sharper than those shown in \Cref{fig:ChemicalBnds}. We did this by raising $\deg(V)$ up to 14 and optimizing over $\lambda$. In the $D\ll1$ regime where $c_*$ is small, we want to choose $\eps$ as small as possible because computed bounds are conservative by at least an order-$\eps$ margin. We cannot take $\eps$ smaller than $10^{-5}$ due to numerical ill conditioning, however, so our bounds cease to be sharp for very small $D$. We nonetheless can probe the asymptotic scaling of $c_*$ because our upper and lower bounds agree to within three significant digits for $D$ as small as 0.005 and as large as $10^4$. \Cref{tab: asymptotic D} summarizes the results, which suggest that $C_0\approx1.2$ and $C_\infty\approx0.861$.

\begin{table}
 \begin{tabular}{l c c c c c c c c} 
 \hline
 D & $10^{-3/4}$ & $10^{-1}$ & $10^{-5/4}$ & $10^{-3/2}$ & $10^{-7/4}$ & $10^{-2}$ & $10^{-9/4}$     \\
 \hline
$c_*$ & 0.185 & 0.115 & 0.0650 & 0.0373 & 0.0212 & 0.0120 & 0.00675    \\
$c_*/D$ & 1.04 & 1.15 & 1.16 & 1.18 & 1.19 & 1.20 & 1.20
 \end{tabular} \vskip 0.3cm
 \begin{tabular}{l c c c c c c c c} 
 \hline
 D & $10^1$ & $10^{3/2}$ & $10^2$ & $10^{5/2}$ & $10^3$ & $10^{7/2}$ & $10^4$ & \\
 \hline
$c_*$ & 2.66 & 4.80 & 8.59 & 15.3 & 27.2 & 48.4 & 86.1 \\
$c_*/\sqrt{D}$ & 0.842 & 0.855 & 0.859 & 0.861 & 0.861 & 0.861 & 0.861
 \end{tabular}
  \caption{Approximations of the minimum wave speeds $c_*$ for small $D$ (top) and large $D$ (bottom), along with the ratios that approximate prefactors of asymptotic scalings when $D\ll1$ and $D\gg1$. Values of $c_*$ reflect computed upper and lower bounds that agree to all tabulated digits.}
\label{tab: asymptotic D}
\end{table}


\section{\label{sec:Con}Conclusions}

We presented methods for constructing trapping boundaries in the phase spaces of dynamical systems. Although such boundaries have many uses, we have focused on boundaries that imply existence or nonexistence of heteroclinic connections, particularly in systems where those connections correspond to one-dimensional travelling waves in PDEs of reaction--diffusion type. For such RD equations our methods can be used to determine whether or not travelling waves exist at a specified speed. In many RD equations with monostable reaction terms, travelling waves exist if and only if their speed exceeds some minimum value $c_*$. In such cases, verifying existence of a travelling wave at some larger speed gives an upper bound on $c_*$, and verifying nonexistence at some smaller speed gives a lower bound on $c_*$. Crucially, our methods can be carried out not only analytically but also computationally using polynomial optimization, at least when the dynamical systems being studied have polynomial right-hand sides. 

We illustrated our approach by using polynomial optimization to compute bounds on $c_*$ for two examples of scalar RD equations and for a two-component RD system. Many of the upper and lower bounds reported here are sharp or very close to sharp, unlike most analytical estimates. Rather than replacing analysis, these computations complement it. Our computational results have guided us to new analytical upper bounds on $c_*$ for both examples of scalar RD equations, and to approximations for the asymptotic scalings of $c_*$ in the two-component RD system. Moreover, similar bounding computations for a two-component Keller--Segel model have led to an analytical proof for the exact value of $c_*$, as reported in a separate work~\cite{Bramburger}.

Our methods apply broadly to scalar RD equations, and their extension to multicomponent RD systems is straightforward when showing \emph{non}existence of travelling waves. To show existence in multicomponent systems, various complications arise that likely preclude a single unified approach. The main reason is that the relevant dynamical systems have a phase space dimension of at least three, so trapping boundaries alone may not imply the existence of a heteroclinic connection. In some multicomponent RD systems the remainder of the argument is provided by a monotone quantity, as arises naturally in the two-component example of~\cref{sec:Multi}. Lacking an obvious monotone quantity, more sophisticated techniques are needed to verify the existence of heteroclinic connections. Past authors have used trapping regions in conjunction with techniques such as Lyapunov functions, the stable manifold theorem, and generalizations of Wazewski's theorem \cite{Dunbar,Dunbar2,Hsu,Huang,Li,Li2}. All of these approaches may benefit from computational tools for finding trapping boundaries, but in this first work we have restricted ourselves to RD equations where finding the right trapping boundaries is the main challenge.

A natural continuation of the present work is to study one-dimensional travelling waves of RD equations whose reaction terms are not monostable. All examples in the present work have monostable reaction terms, resulting in travelling waves at all speeds above some minimum. In such examples it was not hard to verify existence or nonexistence at speeds sufficiently larger or smaller than this minimum, respectively. Bistable reaction terms, on the other hand, often lead to travelling waves that exist only at some unique speed. A prominent example is the Nagumo equation, $u_t = u_{xx} + u(u - \mu)(1 - u)$ with $\mu \in (0,1)$, where a travelling wave connecting $u=1$ to $u=0$ exists at the unique speed $c_0 = \frac{1}{\sqrt{2}}(1 - 2\mu)$ \cite{Fife}. In other words, in a suitably defined dynamical system whose heteroclinic connections correspond to travelling waves of the Nagumo equation, a connection exists only when $c=c_0$. At any $c$ value larger or smaller our methods might be able to verify nonexistence of a connection, but this alone would not imply upper or lower bounds on $c_0$. Producing such bounds requires distinguishing between the geometry of phase space in the $c>c_0$ and $c<c_0$ cases. This can be done analytically for the Nagumo equation but not for many other bistable RD equations. Novel methods based on computational polynomial optimization may lead to progress in studying such equations, as they have here for monostable RD equations.


\section*{Acknowledgements} 
We thank Alexander Chernyavsky for performing numerical integration of the generalized Fisher--KPP equation to obtain the wave speeds included in \cref{FisherTable}. We also thank Ryan Goh and Jane MacDonald for suggestions to improve the manuscript. Both authors were partially supported by the NSERC Discovery Grants Program via awards RGPIN-2018-04263, RGPAS-2018-522657, and DGECR-2018-00371, and one of us (JB) was partially supported by a PIMS Postdoctoral Fellowship.


\end{document}